\newtheorem{theorem}{Theorem}[section]
\newtheorem{lemma}[theorem]{Lemma}
\newtheorem{definition}[theorem]{Definition}
\newtheorem{remark}{Remark}
\newcommand{\MP}{\mathcal P}
\newcommand{\MC}{\mathcal C}
\DeclareMathOperator{\Mod}{Mod}
\DeclareMathOperator{\Homeo}{Homeo}
\DeclareMathOperator{\Aut}{Aut}
\newcommand{\mathsym}[1]{{}}
\newcommand{\unicode}[1]{{}}
\begin{document}

\title{Finite rigid subgraphs of pants graphs}
\author{Jes\'{u}s Hern\'{a}ndez Hern\'{a}ndez\thanks{Partially supported by the UNAM Post-Doctoral Scholarship Program 2017 at the CCM-UNAM, and the CNRS-CONACYT UMI International Laboratory Solomon Lefschetz.},  Christopher J. Leininger\thanks{Partially supported by NSF grant DMS-1811518 and NSF grants DMS 1107452, 1107263, 1107367 ``RNMS: GEometric structures And Representation varieties" (the GEAR Network).},
 and \\Rasimate Maungchang\thanks{Partially supported by a new researcher grants sponsored by Ministry of Science and Technology, National Science and Technology Development Agency, Thailand (No. FDA-CO-2561-8553-TH).}.}
\date{\today}
\maketitle

\begin{abstract}
Let $S_{g,n}$ be an orientable surface of genus $g$ with $n$ punctures. We identify a finite rigid subgraph $X_{g,n}$ of the pants graph $\MP(S_{g,n})$, that is, a subgraph with the property that any simplicial embedding of $X_{g,n}$ into any pants graph $\MP(S_{g',n'})$ is induced by an embedding $S_{g,n}\to S_{g',n'}$.  This extends results of the third author for the case of genus zero surfaces.
\end{abstract}
\section{Introduction}
\label{sec:Introduction}
Let $S=S_{g,n}$ be an orientable surface of genus $g$ with $n$ punctures and let $\Mod^\pm(S)=\pi_0(\Homeo(S))$ be the extended mapping class group. Work of Ivanov~\cite{Ivanov} shows that, for most surfaces, the curve complexes $\MC(S)$ have the property that $\Aut(\MC(S))\cong\Mod^\pm(S)$ (see also Korkmaz~\cite{Korkmaz} and Luo~\cite{Luo}).  Aramayona and Leininger~\cite{AL} extended the results and showed that curve complexes contain {\em finite rigid sets}, which are finite subgraphs with the property that any simplicial embedding is a restriction of an element of $\Mod^\pm(S)$. They also constructed an exhaustion of the curve complex by finite rigid sets~\cite{AL2}; see also Hern{\'a}ndez Hern{\'a}ndez \cite{Hern}.  Existence of and exhaustion by finite rigid sets for non-orientable surfaces have recently been obtained by Ilbira and Korkmaz \cite{IK} and Irmak \cite{Irm}.

Margalit~\cite{Mar} proved a result analogous to Ivanov's for the pants graph, where he showed that $\Aut(\MP(S)) \cong \Mod^\pm(S)$ (with a few exceptions). His result was extended by Aramayona~\cite{Aramayona} who showed that any injective simplicial map $\MP(S) \to \MP(S')$ is {\em induced by an embedding} $f \colon S \to S'$ (see below).  Analogous to the results of Aramayona and Leininger, Maungchang showed that the pants graphs of punctured spheres contain finite rigid sets~\cite{Rasimate1}, and in fact one can exhaust the pants graphs by finite rigid sets~\cite{Rasimate2}.

In this paper we extend the results of \cite{Rasimate1} to (essentially) all finite type surfaces. More precisely, we prove the following theorem.

\begin{theorem}[\textbf{Main theorem}]
Let $S_{g,n}$ be an orientable surface of genus $g$ with $n$ punctures such that $3g-3+n > 1$. There exists a finite subgraph $X_{g,n}\subset\MP(S_{g,n})$ such that for any surface $S_{g',n'}$ and any injective simplicial map
\[
\phi:X_{g,n}\to\MP(S_{g',n'}),
\]
there exists a $\pi_1$-injective embedding $f:S_{g,n}\to S_{g',n'}$ that induces $\phi$.  Moreover, if $(g,n) \not \in \{ (2,0),(1,2)\}$, then $f$ is unique up to isotopy.  If $(g,n) \in \{ (2,0),(1,2)\}$ then $f$ is unique up to isotopy and composition with the hyperelliptic involution.
\end{theorem}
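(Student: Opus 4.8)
The plan is to follow the template established by Aramayona and Leininger \cite{AL} for curve complexes and by Maungchang \cite{Rasimate1} in genus zero: one builds $X_{g,n}$ around a finite rigid set of curves, shows that an abstract embedding of $X_{g,n}$ forces a coherent action on those curves, and then imports curve-complex rigidity.

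\textbf{Construction of $X_{g,n}$.} Fix a finite rigid set $\mathfrak{X}_{g,n}\subset\MC(S_{g,n})$ as in \cite{AL}, in the form that is rigid with respect to simplicial embeddings into the curve complex of an arbitrary surface, and let $\Gamma$ be the finite set of isotopy classes of curves it involves, enlarged if necessary so that: (i) $\Gamma$ contains at least one complete pants decomposition; (ii) the graph with vertices the pants decompositions of $S_{g,n}$ supported in $\Gamma$ and edges the elementary moves between them is connected; and (iii) every curve of $\Gamma$ is the unique curve of $S_{g,n}$ realizing a prescribed pattern of disjointness and minimal (one- or two-point) intersection with the curves of $\mathfrak{X}_{g,n}$. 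Let $X_{g,n}$ be this pants graph, augmented by finitely many further pants decompositions so that it contains every \emph{square} (a pair of elementary moves at a common vertex with disjoint supporting subsurfaces, which then spans an embedded $4$-cycle) and every pentagon needed below. Finiteness is immediate; the content is in arranging (ii) and (iii) by an explicit choice of $\Gamma$, extending Maungchang's genus-zero configuration and the Aramayona--Leininger configurations to all surfaces with $3g-3+n>1$.

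\textbf{Recovering the action on curves.} Let $\phi\colon X_{g,n}\to\MP(S_{g',n'})$ be an injective simplicial map; being simplicial and injective it sends each edge of $X_{g,n}$ (an elementary move) to an edge of $\MP(S_{g',n'})$ (an elementary move). The supporting subsurface of an elementary move between $P$ and $P'$ is the unique complexity-one component of the complement of $P\cap P'$; it is a one-holed torus or a four-holed sphere, with the two exchanged curves meeting once or twice respectively. A curve is fixed by the move exactly when it is disjoint from this subsurface, and two moves at a common vertex commute exactly when their supporting subsurfaces are disjoint. Using the squares (and, where two curves cannot otherwise be separated, the pentagons) built into $X_{g,n}$, one detects combinatorially which families of moves in $X_{g,n}$ fix a common curve and shows the $\phi$-images of such a family do likewise; assembling this over $\Gamma$ produces an injection $\Phi\colon\Gamma\to\MC(S_{g',n'})$ that preserves disjointness and preserves the distinction between a pair of curves filling a one-holed torus and a pair filling a four-holed sphere (equivalently, geometric intersection $1$ versus $2$). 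This recovery step, parallel to the arguments of Margalit \cite{Mar} and Aramayona \cite{Aramayona} but confined to the finite set $X_{g,n}$, is where I expect the main obstacle: one must ensure $X_{g,n}$ contains enough local structure to determine the \emph{type} of every elementary move --- the two types being locally indistinguishable in general --- and the coherence of every common curve.

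\textbf{Conclusion and uniqueness.} The restriction $\Phi|_{\mathfrak{X}_{g,n}}$ is an injective simplicial map $\mathfrak{X}_{g,n}\to\MC(S_{g',n'})$, hence by \cite{AL} is induced by a $\pi_1$-injective embedding $f\colon S_{g,n}\to S_{g',n'}$. By property (iii), $f$ realizes $\Phi$ on all of $\Gamma$, so it carries each vertex of $X_{g,n}$ to its $\phi$-image (completing with a fixed partial pants decomposition of $S_{g',n'}\setminus f(S_{g,n})$); that is, $f$ induces $\phi$. If $f_1$ and $f_2$ both induce $\phi$, they agree on every curve of $\Gamma$, in particular on $\mathfrak{X}_{g,n}$, which fills $S_{g,n}$; hence they differ by an element of $\Mod^{\pm}(S_{g,n})$ acting trivially on $\MC(S_{g,n})$. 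Such an element is trivial unless $(g,n)\in\{(2,0),(1,2)\}$, in which case it is the hyperelliptic involution, yielding exactly the stated ambiguity.
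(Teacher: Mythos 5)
There is a genuine gap, and it sits exactly where you flagged it: the ``recovery step.'' An injective simplicial map on a finite subgraph of the pants graph does not, by any soft argument, induce a well-defined map on the underlying curves. To extract $\Phi\colon\Gamma\to\MC(S_{g',n'})$ you must show that whenever a curve $\gamma$ occurs in several vertices of $X_{g,n}$, a single curve occurs in all their $\phi$-images in the corresponding position, and that the type of each elementary move (one-holed torus versus four-holed sphere) is preserved. Margalit and Aramayona achieve this using the full, infinite pants graph (arbitrarily many squares, pentagons, and hexagons through any configuration); confining the argument to a finite subgraph is precisely the content of the theorem, and your proposal asserts rather than proves that ``enough local structure'' can be packed into $X_{g,n}$. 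A second, independent problem is the final appeal to \cite{AL}: the Aramayona--Leininger finite rigid sets are rigid for locally injective simplicial maps $\mathfrak{X}(S)\to\MC(S)$ into the curve complex of the \emph{same} surface, induced by mapping classes. There is no version in \cite{AL} for embeddings into $\MC(S_{g',n'})$ with $(g',n')$ arbitrary, so even a successful recovery of $\Phi$ would not let you conclude via that citation; you would have to prove a new curve-complex rigidity statement with arbitrary targets, which is a separate project.

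For comparison, the paper avoids curve-complex rigidity entirely and works bottom-up inside the pants graph. It first upgrades Maungchang's $X_{0,n}$ to arbitrary targets by showing the core pentagon must map to an alternating pentagon (using that there are no alternating $2$- or $3$-tuples and that alternating $4$-tuples force matching edges on opposite sides). It then builds an explicit rigid $X_{1,2}$ around Margalit's thickened almost alternating hexagon, whose image is forced to be another such hexagon and hence, by Margalit's Lemma 9, induced by a homeomorphism of a twice-punctured torus subsurface. The general $X_{g,n}$ is a union of one copy of $X_{0,2g+n}$ (for the complement of a cut system) and $g$ copies of $X_{1,2}$ (one per handle), glued along pairs of Farey-graph triangles; rigidity of each piece yields subsurface homeomorphisms $f_0,\ldots,f_g$, and the overlaps force these to glue to a single embedding $f$. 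The map on curves is thus a \emph{consequence} of piecewise rigidity, not a prerequisite for it --- which is how the paper sidesteps the obstacle your outline leaves open.
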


In the above theorem, we say that \textbf{$\phi$ is induced by $f$} if there is a multicurve $Q \subset S_{g',n'}$ such that $f(S_{g,n})$ is a component of $S_{g',n'}-Q$ and for any pants decomposition $u$ of $S_{g,n}$, $f^Q(u) = f(u) \cup Q$ is a pants decomposition of $S_{g',n'}$, and determines a simplicial map
\[f^Q:\MP(S_{g,n})\to\MP(S_{g',n'}),\]
satisfying $f^Q|_{X_{g,n}}=\phi$ 

\bigskip

\noindent
\textbf{Outline of the paper.} Section~\ref{sec:Background} contains basic definitions and necessary properties of pants graphs. We extend Maungchang's result \cite{Rasimate1} for pants graph of punctured sphere in Section~\ref{sec:finite rigidity of punctured sphere}, allowing the target to be the pants graph of an arbitrary surface. We prove the main theorem for $S_{1,2}$ in Section~\ref{sec:S12}. Then we use the result to prove the main theorem for $S_{2,0}$ in Section~\ref{sec:S20}. Finally, we combine the results of punctured spheres and $S_{1,2}$ to prove the general case in Section~\ref{sec:Sgn}.

\bigskip

\noindent
{\bf Acknowledgements.} The authors would like to thank Javier Aramayona for useful conversations and the University of Warwick for its hospitality where this work began.

\section{Background and definitions}
\label{sec:Background}
In this section, we provide the necessary definitions and background   material. For more details, see \cite{Aramayona} and \cite{Mar}. Let $S=S_{g,n}$ be an orientable surface of genus $g$ with $n$ holes; we will allow holes to be either punctures or boundary components, and will pass back and forth between the two whenever it is convenient.  A simple closed curve on $S$ is \textbf{essential} if it does not bound a disk or a one-holed disk on $S$. Throughout this paper, a \textbf{curve} is a homotopy class of essential simple closed curves on S, though we will often confuse the homotopy class with a particular representative whenever it is convenient.

The \textbf{geometric intersection number} of two curves $\alpha$ and $\beta$ on $S$, denoted by $i(\alpha,\beta)$, is the minimum number of transverse intersection points among the simple representatives of $\alpha$ and $\beta$. In this paper, the intersection of any two curves refers to their geometric intersection number. If $i(\alpha,\beta)=0$, then $\alpha$ and $\beta$ are \textbf{disjoint}.

A \textbf{multicurve} $Q$ is (the union of) a set of distinct curves on $S$ with pairwise disjoint representatives. A surface homeomorphic to $S_{0,3}$ is called \textbf{a pair of pants}. Given a multicurve $Q$, the \textbf{nontrivial component(s)} of the complement of the curves in $Q$, denoted $(S-Q)_0$, is the union of the components in the complement of $Q$ not homeomorphic to a pair of pants.

A \textbf{pants decomposition} $P$ of $S$ is a multicurve so that $(S-P)_0 = \emptyset$, that is, the complement of the curves in $P$ is a disjoint union of pairs of pants. For a surface $S = S_{g,n}$, its \textbf{complexity} is $\kappa(S) = 3g+n-3$, which when positive is the number of curves in any pants decomposition. The \textbf{deficiency} of a multicurve $Q$ is the number $\kappa(S)-|Q|$. Note that if the deficiency of $Q$ is $1$, then the nontrivial component $(S-Q)_0$ is either $S_{0,4}$ or $S_{1,2}$.

Two pants decompositions $P$ and $P'$ of $S$ differ by an \textbf{elementary move} if there are curves $\alpha, \alpha'$ on $S$ and a deficiency-$1$ multicurve $Q$ such that $P=\{\alpha\}\cup Q, P'=\{\alpha'\}\cup Q$, and $i(\alpha,\alpha')=1 $ if $(S-Q)_0\cong S_{1,1}$ or $i(\alpha,\alpha')=2 $ if $(S-Q)_0\cong S_{0,4}$. Up to a mapping class on the nontrivial component $(S-Q)_0$, there are two types of elementary moves as shown in Figure~\ref{F:elementary moves}. 
\begin{figure}[ht]
\begin{center}
\includegraphics[height=5cm]{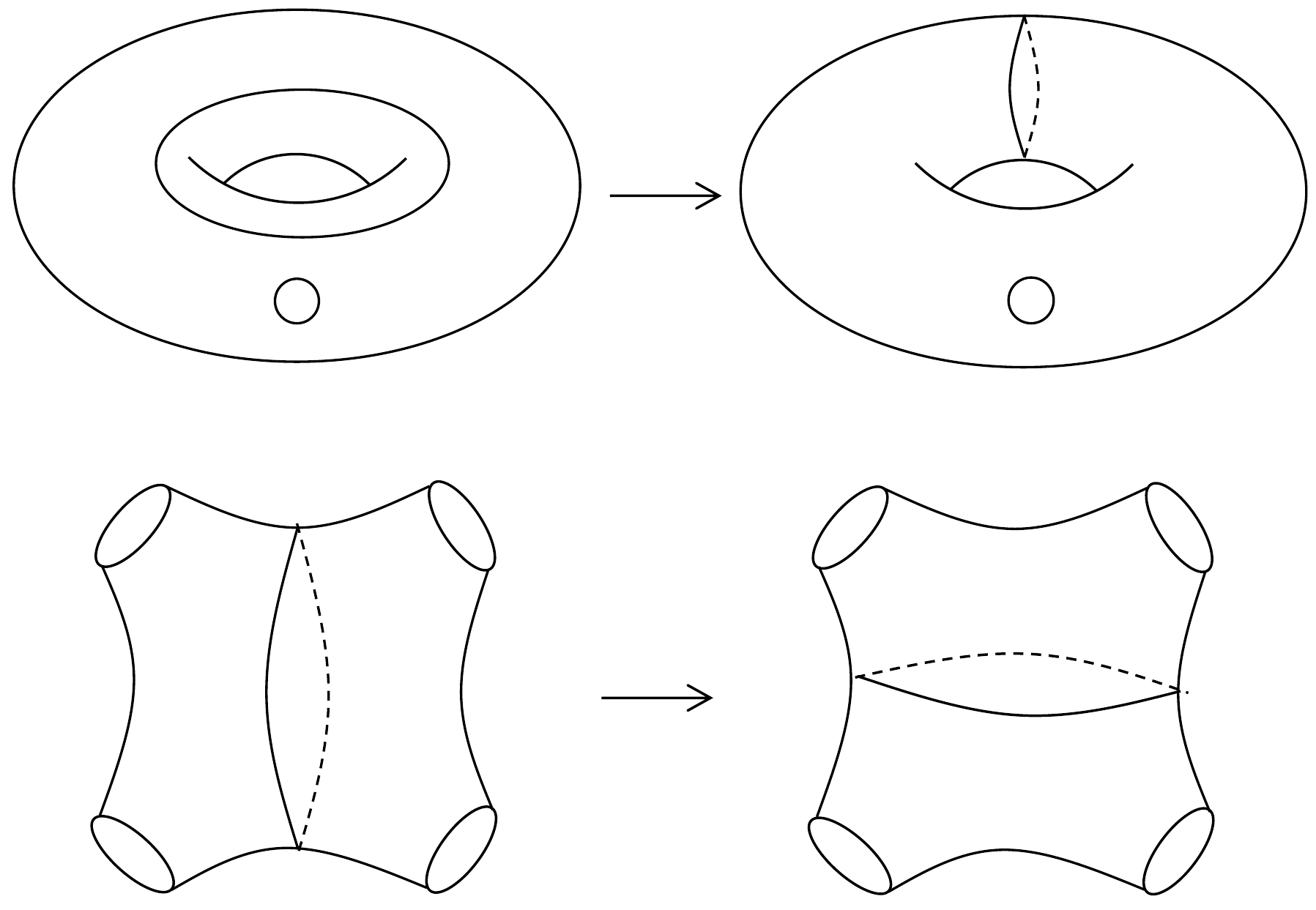} 
\caption{Two types of elementary moves} 
\label{F:elementary moves}
\end{center}
\end{figure}

The \textbf{pants graph} $\MP(S)$ of $S$ is a graph whose vertices correspond to pants decompositions and so that two vertices are connected by an edge if the two corresponding pants decompositions differ by an elementary move. The pants graph is connected, infinite, and locally infinite when $\kappa(S) > 0$; see \cite{HT}. The pants graphs of $S_{1,1}$ and $S_{0,4}$ are isomorphic to a \textbf{Farey graph}, see Figure~\ref{F:Farey graph}. We will not make a distinction between a vertex and its corresponding pants decomposition. Given a multicurve $Q$, we let $\MP_Q(S)$ be the subgraph of $\MP(S)$ {\em spanned} by the set of vertices that contain $Q$ (that is, $\MP_Q(S)$ is the largest subgraph with vertex set consisting of such pants decompositions). Given two adjacent vertices $u$ and $v$ in $\MP(S)$, note that $Q=u\cap v$ is a deficiency-$1$ multicurve and $P_Q(S)$ is a Farey graph. Hence, each edge in $\MP(S)$ is contained in a Farey graph and there are only two triangles attached to each edge.

\begin{figure}[ht]
\begin{center}
\includegraphics[height=7cm]{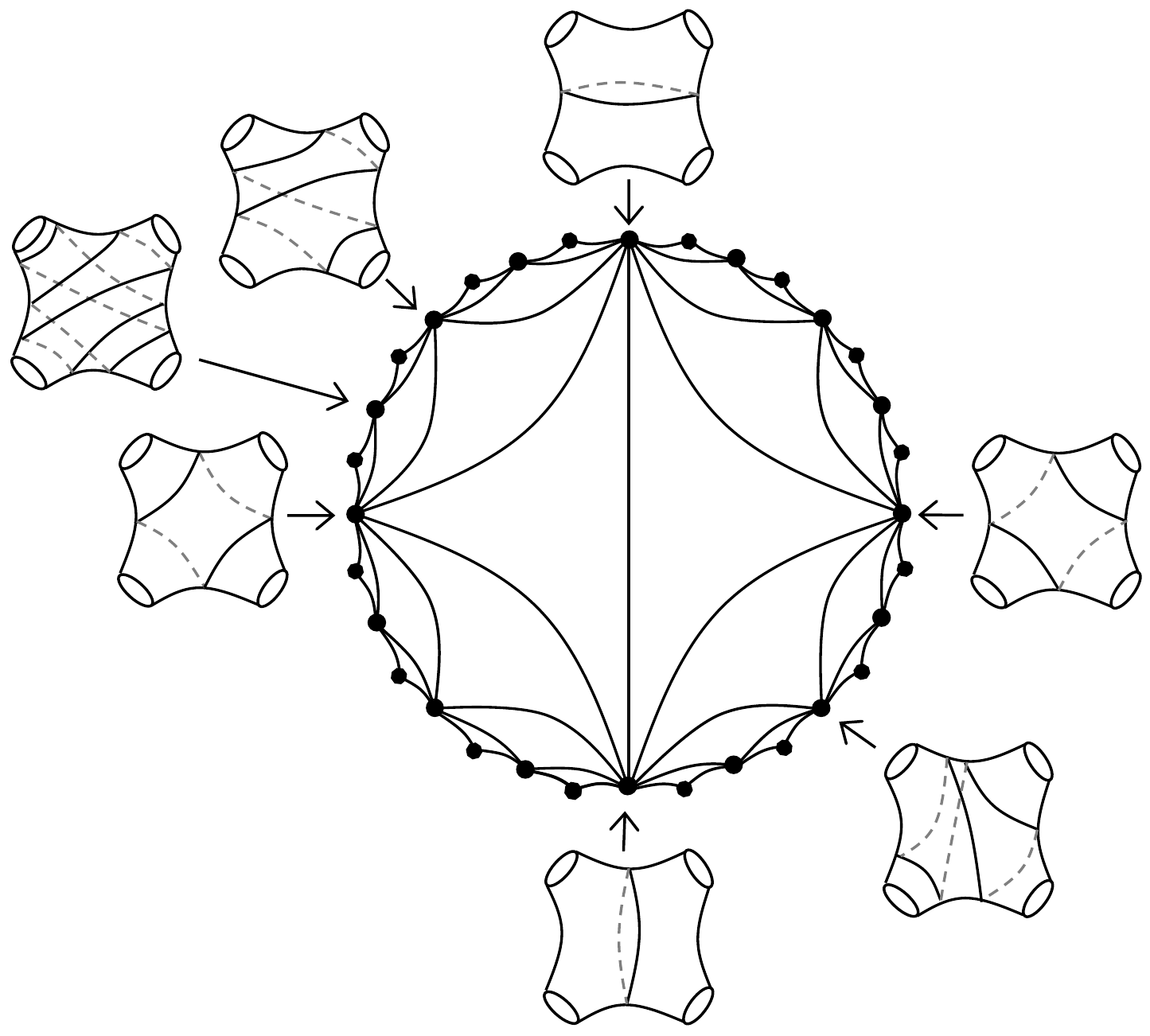} 
\caption{The pants graph $\MP(S_{0,4})$ isomorphic to a Farey graph and some curves representing its vertices.} 
\label{F:Farey graph}
\end{center}
\end{figure}

A \textbf{circuit} in $\MP(S)$ is a subgraph homeomorphic to a circle. A circuit is \textbf{alternating} if any two consecutive edges are in different Farey graphs. We call a circuit a \textbf{triangle}, \textbf{square}, \textbf{pentagon}, or \textbf{hexagon} if it has $3$, $4$, $5$, or $6$ vertices, respectively.

We recall the definition of alternating tuples defined by Aramayona in \cite{Aramayona}, building on Margalit's alternating circuits \cite{Mar}.
\begin{definition}
A cyclically ordered $k$-tuple of distinct vertices $(v_1,v_2,...,v_k)$ in a pants graph is called  an \textbf{alternating $k$-tuple} if $v_i$ and $v_{i+1}$ are in the same Farey graph $F_i$ and $F_i\neq F_{i+1}$ (indices $i$ are taken modulo $k$).
\end{definition}

Alternating $k$-tuples arise naturally from any circuit in $\MP(S_{g,n})$.
\begin{lemma}\label{L:circuits to alternating tuples}  For any circuit of length $m \geq 3$ in $\MP(S_{g,n})$ with vertices $v_1,\ldots,v_m$, either the entire circuit is contained in a single Farey graph, or else there exists $1 \leq i_1 < i_2 < \cdots < i_k \leq m$ so that $v_{i_1},\ldots,v_{i_k}$ is an alternating $k$-tuple.
\end{lemma}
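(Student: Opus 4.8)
The plan is to extract the alternating tuple directly from the circuit by discarding "redundant" vertices — namely, those vertices $v_j$ for which the two edges of the circuit incident to $v_j$ lie in the same Farey graph. First I would fix some terminology: for a circuit with cyclically ordered vertices $v_1, \ldots, v_m$, each edge $e_j = \{v_j, v_{j+1}\}$ lies in a unique Farey graph $F(e_j)$ (indices mod $m$), since by the discussion in Section~\ref{sec:Background} every edge of $\MP(S_{g,n})$ is contained in a unique Farey graph (the one spanned by the deficiency-$1$ multicurve $v_j \cap v_{j+1}$). Call a vertex $v_j$ a \emph{turning vertex} if $F(e_{j-1}) \neq F(e_j)$. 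If there are no turning vertices at all, then $F(e_1) = F(e_2) = \cdots = F(e_m)$, so every edge — hence the entire circuit — lies in a single Farey graph, which is the first alternative.

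Otherwise, let $v_{i_1}, \ldots, v_{i_k}$, listed in cyclic order with $i_1 < i_2 < \cdots < i_k$, be the turning vertices; I claim this is the desired alternating $k$-tuple. The key point is that between two consecutive turning vertices $v_{i_a}$ and $v_{i_{a+1}}$ (cyclically), all the edges $e_{i_a}, e_{i_a + 1}, \ldots, e_{i_{a+1}-1}$ lie in the \emph{same} Farey graph — call it $F_a$ — precisely because none of the intermediate vertices $v_{i_a+1}, \ldots, v_{i_{a+1}-1}$ is a turning vertex. Since $v_{i_a}$ and $v_{i_{a+1}}$ are both incident to edges lying in $F_a$, and since the Farey graph is a full subgraph on the pants decompositions containing the relevant deficiency-$1$ multicurve, the entire sub-path of the circuit from $v_{i_a}$ to $v_{i_{a+1}}$ lies in $F_a$; in particular $v_{i_a}$ and $v_{i_{a+1}}$ both lie in $F_a$. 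Moreover $F_a \neq F_{a+1}$: at the turning vertex $v_{i_{a+1}}$ we have $F(e_{i_{a+1}-1}) \neq F(e_{i_{a+1}})$, i.e.\ exactly $F_a \neq F_{a+1}$, which is the definition of a turning vertex. Finally the $v_{i_a}$ are distinct since they are distinct vertices of the circuit. This verifies all the conditions in the definition of an alternating $k$-tuple. (One should note $k \geq 1$; in fact since consecutive Farey graphs around the cycle differ, a short argument shows $k \geq 3$ once $k \geq 1$ and $m \geq 3$, but the statement as phrased only requires producing \emph{some} alternating $k$-tuple, so this is not strictly needed.)

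The main obstacle, and the point deserving the most care, is the claim that a maximal sub-path of the circuit whose consecutive edges all lie in a common Farey graph $F_a$ actually has \emph{all its vertices} in $F_a$ — equivalently, that "lying in a common Farey graph" is detected edge-by-edge and propagates along the path. This is where one uses that distinct Farey subgraphs of $\MP(S_{g,n})$ meet each edge in a controlled way: each edge lies in a \emph{unique} Farey graph (as recalled above, only two triangles are attached to each edge, and each Farey graph containing the edge is determined by the deficiency-$1$ multicurve $v_j\cap v_{j+1}$), so once two consecutive edges $e_{j-1}, e_j$ share a Farey graph, that shared Farey graph is forced to be $F(e_{j-1}) = F(e_j)$, and then $v_{j-1}, v_j, v_{j+1}$ all lie in it. Iterating along the non-turning vertices gives the sub-path claim. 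Everything else is bookkeeping with cyclic indices, so once this propagation step is nailed down the lemma follows.
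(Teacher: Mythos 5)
Your proposal is correct and takes essentially the same approach as the paper: the paper decomposes the circuit into maximal arcs each contained in a single Farey graph (using that each edge lies in a unique Farey graph) and takes the endpoints of these arcs---your ``turning vertices''---as the alternating tuple. Your write-up simply makes explicit the propagation step and the verification that consecutive turning vertices share a Farey graph while consecutive Farey graphs differ, which the paper leaves implicit.
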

\begin{proof} Each edge of the circuit is contained in a unique Farey graph, and we can decompose the circuit into maximal arcs contained in single Farey graphs.  Either all arcs are in a single Farey graph so the entire circuit is, or endpoints of maximal arcs determine an alternating $k$--tuple.
\end{proof}
\begin{lemma}\label{L:no alternating 2-tuple}
There are no alternating  $2$-tuple and no alternating $3$-tuple.
\end{lemma}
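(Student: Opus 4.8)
The plan is to argue directly from the definitions of alternating tuple and of when two vertices lie in a common Farey graph. Recall that for adjacent vertices $u, v$ in $\MP(S)$, the multicurve $Q = u \cap v$ has deficiency $1$, and the Farey graph containing the edge $uv$ is exactly $\MP_Q(S)$, whose vertices are the pants decompositions containing $Q$. Thus two distinct vertices $x, y$ lie in a common Farey graph if and only if $x \cap y$ is a deficiency-$1$ multicurve (equivalently $x$ and $y$ share all but one curve), and in that case the Farey graph is uniquely determined as $\MP_{x \cap y}(S)$.

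For the alternating $2$-tuple: suppose $(v_1, v_2)$ were alternating. By definition $v_1, v_2$ lie in a common Farey graph $F_1$ with $v_1, v_2$ in a common Farey graph $F_2$ and $F_1 \neq F_2$. But the Farey graph containing two given distinct vertices is unique (it must be $\MP_{v_1 \cap v_2}(S)$), so $F_1 = F_2$, a contradiction.

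For the alternating $3$-tuple: suppose $(v_1, v_2, v_3)$ is alternating, so $v_1, v_2 \in F_1$, $v_2, v_3 \in F_2$, $v_3, v_1 \in F_3$, with $F_1, F_2, F_3$ pairwise distinct, $F_i = \MP_{Q_i}(S)$ for deficiency-$1$ multicurves $Q_1 = v_1 \cap v_2$, $Q_2 = v_2 \cap v_3$, $Q_3 = v_3 \cap v_1$. Since $F_1 \neq F_2$ we have $Q_1 \neq Q_2$; as both have deficiency $1$ (i.e. size $\kappa(S) - 1$) and both are contained in $v_2$ (which has size $\kappa(S)$), we get $Q_1 \cup Q_2 = v_2$ and $|Q_1 \cap Q_2| = \kappa(S) - 2$. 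Writing $v_2 = R \cup \{\alpha\} \cup \{\beta\}$ with $R = Q_1 \cap Q_2$, $Q_1 = R \cup \{\alpha\}$, $Q_2 = R \cup \{\beta\}$, we find $\alpha \notin v_1$ (else $Q_1 = v_1 \cap v_2$ would contain $R \cup \{\alpha, \beta\}$, forcing $v_1 = v_2$) but $\beta \in v_1$ since $Q_2 \subset v_3$ has deficiency $1$ forces... here the bookkeeping is to track exactly which of $\alpha, \beta, R$ each $v_i$ contains. Running the same analysis at $v_3$ (using $Q_2 \neq Q_3$) and at $v_1$ (using $Q_3 \neq Q_1$) pins down $v_1, v_2, v_3$ to all contain $R$ and each contain exactly one extra pair from a triple of curves filling the complexity-$2$ subsurface $(S - R)_0$; but a complexity-$2$ pants decomposition of $(S-R)_0$ together with the requirement that consecutive ones share a deficiency-$1$ multicurve (within $(S-R)_0$) and are pairwise distinct forces a contradiction — one shows the three ``swap'' curves cannot be arranged cyclically, because the only way for $v_1$ and $v_3$ to share all-but-one curve is for them to differ in the $Q_2$ slot, but then $Q_3 = v_3 \cap v_1$ cannot be distinct from both $Q_1$ and $Q_2$.

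The main obstacle is the $3$-tuple case: the bookkeeping of which curves lie in which $v_i$ must be done carefully, and one must genuinely use that a deficiency-$1$ multicurve has a complement of complexity $2$ (so $(S-Q)_0$ is $S_{0,4}$ or $S_{1,2}$), together with the fact that in such a subsurface one cannot find three pairwise-distinct pants decompositions that are pairwise ``one elementary-move-support apart'' in three mutually distinct Farey graphs. An alternative, perhaps cleaner, route is to reduce to the deficiency-$2$ subsurface spanned by $v_1 \cup v_2 \cup v_3$ (of complexity $2$) and check by hand — essentially examining the local structure of $\MP(S_{0,4} \sqcup \text{pants})$ and $\MP(S_{1,2} \sqcup \text{pants})$ — that no such configuration exists; I would take whichever of these is shorter once the details are written out.
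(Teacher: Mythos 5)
Your argument for the $2$-tuple case is complete and is the same as the paper's: two distinct vertices lying in a common Farey graph determine that Farey graph uniquely as $\MP_{v_1\cap v_2}(S)$, so $F_1=F_2$. Your approach to the $3$-tuple case is also the right one (and the one the paper takes), but as written it has a genuine gap: the ``bookkeeping'' you defer is the entire content of the argument, and the one concrete step you do attempt is garbled. You assert $\alpha\notin v_1$, which is false --- $\alpha\in Q_1=v_1\cap v_2\subseteq v_1$ by construction; the parenthetical justification you give is actually the (correct) argument that $\beta\notin v_1$. The subsequent sentence ``but $\beta\in v_1$ since $Q_2\subset v_3$ has deficiency $1$ forces\dots'' trails off without establishing anything, and the final claim that ``$Q_3$ cannot be distinct from both $Q_1$ and $Q_2$'' is a gesture at the conclusion rather than a derivation of it. Neither of your two proposed routes is carried to completion.

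The missing step is short, and you have all the ingredients. With your notation $v_2=R\cup\{\alpha,\beta\}$, $Q_1=R\cup\{\alpha\}$, $Q_2=R\cup\{\beta\}$: since $v_1\supseteq Q_1$ and $v_1\neq v_2$, we have $v_1=R\cup\{\alpha,\beta'\}$ where $\beta'\neq\beta$ and $i(\beta,\beta')>0$ (two distinct vertices of the Farey graph $\MP_{Q_1}(S)$ correspond to distinct, hence intersecting, curves in the complexity-$1$ piece $(S-Q_1)_0$). Likewise $v_3=R\cup\{\alpha',\beta\}$ with $\alpha'\neq\alpha$ and $i(\alpha,\alpha')>0$. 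Then $\alpha\notin v_3$ (it is distinct from $\alpha'$, and distinct from $\beta$) and $\beta'\notin v_3$ (it is disjoint from $\alpha$, hence distinct from $\alpha'$, and distinct from $\beta$), so $v_1\cap v_3=R$, which has deficiency $2$. Hence $v_1$ and $v_3$ do not lie in a common Farey graph, and no alternating $3$-tuple exists. This is exactly the paper's proof; there is no need for the case analysis on $(S-R)_0\cong S_{0,4}$ or $S_{1,2}$ that your alternative route suggests.
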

\begin{proof}
Let $v_1$ and $v_2$ be vertices in a Farey graph $F$. Then $F$ is uniquely determined by the deficiency-$1$ multicurve $Q=v_1\cap v_2$, more specifically, $F=\MP_Q(S_{g,n})$. Hence there is no alternating  $2$-tuple.

Suppose $(v_1,v_2,v_3)$ is an alternating $3$-tuple. By definition of alternating tuple, we may write $v_1=$ $\{\alpha_1,$ $\alpha_2,$ $\alpha_3,$ $...,$ $\alpha_{3g+n-3}\},$ $v_2=$ $\{\alpha_1',$ $\alpha_2,$ $\alpha_3,$ $...,$ $\alpha_{3g+n-3}\},$ and $v_3=$ $\{\alpha_1',$ $\alpha_2',$ $\alpha_3,$ $...,$ $\alpha_{3g+n-3}\}$. Observe that $v_1$ and $v_3$ are in the same Farey graph $F=\MP_Q(S_{g,n})$ where $Q=v_1\cap v_3$ is a deficiency-$1$ multicurve but $\alpha_1',\alpha_2'\notin Q$ because $\alpha_1'$ intersects $\alpha_1$ and $\alpha_2'$ intersects $\alpha_2$. This contradicts the fact that $Q$ has deficiency-$1$. So $(v_1,v_2,v_3)$ is not an alternating $3$-tuple. In fact, given a $3$-tuple, if we know that each pair of two consecutive vertices (modulo $3$) is in a Farey graph, then all three vertices must be contained in a Farey graph.  
\end{proof}


The next lemma follows easily from Aramayona \cite[Lemma 8]{Aramayona}.
\begin{lemma}\label{L:square}
Let $(v_1, v_2, v_3, v_4)$ be an alternating 4-tuple. There exists a simplicial bijection $\phi:F_1\to F_3$ such that $\phi(v_1)=v_4$ and $\phi(v_2)=v_3$ . In particular, if there are $m$ paths of length $n$ in $F_1$ connecting $v_1$ and $v_2$, then there are also $m$ paths of length $n$ in $F_3$ connecting $v_3$ and $v_4$.
\end{lemma}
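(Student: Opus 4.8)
The plan is to read off the desired bijection from the structure of an alternating $4$-tuple provided by Aramayona's Lemma~8, and then to observe that the path-count assertion is automatic once $\phi$ is known to be a simplicial isomorphism.

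First I would fix notation. For $i$ modulo $4$ set $Q_i = v_i\cap v_{i+1}$, a deficiency-$1$ multicurve, so that $F_i = \MP_{Q_i}(S_{g,n})$ and $Y_i := (S_{g,n}-Q_i)_0$ is a complexity-$1$ surface, with $F_i$ identified with the Farey graph $\MP(Y_i)$ via $P\mapsto P\setminus Q_i$. Write $v_1 = Q_1\cup\{\alpha\}$, $v_2 = Q_1\cup\{\beta\}$, $v_4 = Q_3\cup\{\alpha'\}$, $v_3 = Q_3\cup\{\beta'\}$, where $\alpha\ne\beta$ are the curves of $\MP(Y_1)$ naming $v_1,v_2$ and $\alpha'\ne\beta'$ those of $\MP(Y_3)$ naming $v_4,v_3$; so the lemma amounts to producing a simplicial isomorphism $\MP(Y_1)\to\MP(Y_3)$ sending $\alpha\mapsto\alpha'$ and $\beta\mapsto\beta'$. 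The key input is Aramayona's Lemma~8 applied to the alternating $4$-tuple $(v_1,v_2,v_3,v_4)$, from whose ``square'' structure one reads off: $\alpha=\alpha'$ and $\beta=\beta'$ as curves on $S_{g,n}$; and $R:=Q_1\cap Q_3$ is a deficiency-$2$ multicurve with $Q_1 = R\cup\{b\}$, $Q_3 = R\cup\{b'\}$, with $(S_{g,n}-R)_0$ cut along $b$, and along $b'$, equal in each case to $Y_1$. (Directly: the equalities $Q_{i+1} = (Q_i\setminus\{x_i\})\cup\{y_i\}$ forced by $F_i\ne F_{i+1}$ chase around the $4$-tuple to give $\alpha=\alpha'$, $\beta=\beta'$; then $\alpha,\beta$ are distinct curves lying in, hence filling, each of the complexity-$1$ surfaces $Y_1$ and $Y_3$, and the subsurface filled by a pair of curves is determined up to isotopy by the curves, so $Y_1 = Y_3$.) In particular $Y_1 = Y_3 =: Y$.

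Next I would define $\phi\colon F_1\to F_3$ by $\phi(u) = Q_3\cup(u\setminus Q_1)$. This is well defined: $u\setminus Q_1$ is a pants decomposition of $Y = Y_3 = (S_{g,n}-Q_3)_0$, so $Q_3\cup(u\setminus Q_1)$ is a pants decomposition of $S_{g,n}$ containing $Q_3$, hence a vertex of $F_3$. The assignment $w\mapsto Q_1\cup(w\setminus Q_3)$ is an inverse, and $\phi$ is simplicial because an edge of $F_1$ is realized by an elementary move supported in $Y = (S_{g,n}-Q_1)_0$, and that same move realizes an edge of $F_3 = \MP_{Q_3}(S_{g,n})$ since $(S_{g,n}-Q_3)_0$ is also $Y$. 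By construction $\phi(v_1) = Q_3\cup\{\alpha\} = Q_3\cup\{\alpha'\} = v_4$ and $\phi(v_2) = Q_3\cup\{\beta\} = Q_3\cup\{\beta'\} = v_3$. Finally, since $\phi$ is a bijection with simplicial inverse it is a graph isomorphism, so it restricts to a length-preserving bijection between the paths in $F_1$ joining $v_1$ and $v_2$ and the paths in $F_3$ joining $v_4$ and $v_3$; this is the ``in particular'' statement.

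The step I expect to be the real content is the structural one: verifying that Aramayona's Lemma~8 applies in the stated form and yields $\alpha=\alpha'$ and $\beta=\beta'$ — equivalently, that the alternating $4$-tuple is ``untwisted'', so that $F_1$ and $F_3$ are parallel copies of a single Farey graph rather than Farey graphs sitting on incompatible complexity-$1$ subsurfaces. Everything after that is routine manipulation of pants decompositions.
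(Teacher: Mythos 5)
Your proposal is correct and follows essentially the same route as the paper: invoke Aramayona's Lemma~8 to put the alternating $4$-tuple in standard ``square'' form with a common deficiency-$2$ multicurve, observe that $F_1$ and $F_3$ then lie over the same complexity-$1$ subsurface, and take the map induced by the identity on that subsurface (your $u\mapsto Q_3\cup(u\setminus Q_1)$). The extra verifications of well-definedness and simpliciality are fine but add nothing beyond the paper's argument.
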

\begin{proof}
Let $(v_1,v_2,v_3,v_4)$ be an alternating $4$-tuple and $F_i$ be a Farey graph containing $v_i, v_{i+1}$ (indices taken modulo $4$).  According to \cite[Lemma 8]{Aramayona}, all $v_i$'s have a deficiency-$2$ multicurve $Q$ in common with two nontrivial components, and we can write $v_1=\{\alpha_1,\alpha_2\}\cup Q$, $v_2=\{\alpha_1',\alpha_2\}\cup Q$, $v_3=\{\alpha_1',\alpha_2'\}\cup Q$, $v_4=\{\alpha_1,\alpha_2'\}\cup Q$, see Figure~\ref{F:square}. Farey graphs $F_1$ and $F_3$ correspond to  deficiency-$1$ multicurves $\{\alpha_2\}\cup Q$ and $\{\alpha_2'\}\cup Q$, respectively.  Moreover, these two multicurves have the same nontrivial component in their complements, which is homeomorphic to either a $4$-punctured sphere or a once-punctured torus. The identity map on the nontrivial component induces a simplicial bijection $\phi: F_1\to F_3$ with the desired properties.
\begin{figure}[ht]
\begin{center}
\includegraphics[height=7cm]{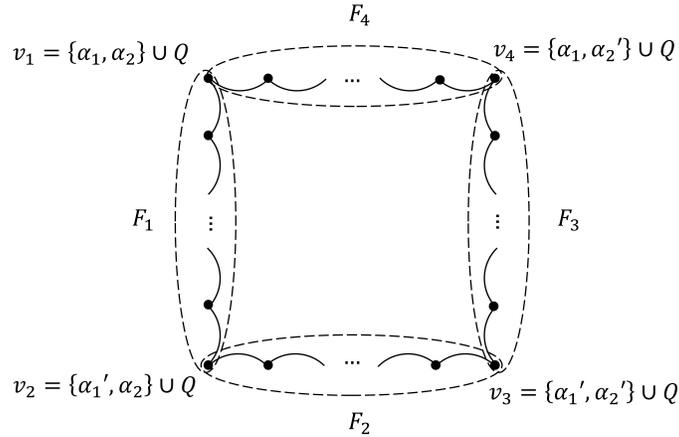} 
\caption{The alternating 4-tuple $(v_1,v_2,v_3,v_4)$.} 
\label{F:square}
\end{center}
\end{figure}
\end{proof}

\begin{lemma}\label{L:length2} A geodesic of length two between points in $\MP(S_{g,n})$ is either the unique geodesic between those points, or else is one of two geodesics between the points.  The latter can only happen when the path is contained in a single Farey graph or is part of an alternating square.
\end{lemma}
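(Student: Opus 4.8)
The plan is to analyze a length-two geodesic $u - w - v$ in $\MP(S_{g,n})$, where $d(u,v) = 2$, by examining the two deficiency-$1$ multicurves $Q_1 = u \cap w$ and $Q_2 = w \cap v$ that determine the Farey graphs $F_1 \ni u,w$ and $F_2 \ni w,v$ containing the two edges. First I would dispose of the degenerate case: if $F_1 = F_2$, then $u, w, v$ all lie in a single Farey graph $F = \MP_Q(S_{g,n})$, and since the Farey graph is itself (isomorphic to) the Farey graph, which has diameter properties we understand completely, one checks directly that two points at distance two in the Farey graph are joined by exactly two geodesics of length two (the two ``sides'' around the relevant ideal quadrilateral). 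So from now on assume $F_1 \neq F_2$.

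Next I would suppose there is a second geodesic $u - w' - v$ with $w' \neq w$, giving Farey graphs $F_1' \ni u, w'$ and $F_2' \ni w', v$. The key observation is that since $u$ and $v$ are at distance $2$ and not adjacent, $(u, w, v, w')$ is a cyclically ordered $4$-tuple of distinct vertices with consecutive vertices in common Farey graphs; I would show this is an alternating $4$-tuple. The only way it could fail to be alternating is if two consecutive edges lie in the same Farey graph, i.e. $F_1 = F_2$ (excluded) or $F_2' = F_1'$ (which would put $u,w',v$ in a single Farey graph $F'$) or $F_2 = F_2'$ or $F_1 = F_1'$. If, say, $F_2 = F_2'$, then $w, v, w'$ lie in a common Farey graph $F_2$; but also $u,w \in F_1$ and $u, w' \in F_1'$, and I would argue via Lemma~\ref{L:no alternating 2-tuple} (a Farey graph is determined by the deficiency-$1$ multicurve cut out by any two of its vertices) that this forces enough curves to coincide that either $u$ lands in $F_2$ as well — making the whole configuration lie in one Farey graph, the second alternative of the statement — or we reach a contradiction with $d(u,v) = 2$. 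The remaining case is that $(u,w,v,w')$ genuinely is an alternating square; this is precisely the ``part of an alternating square'' alternative.

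The main obstacle I anticipate is the bookkeeping in the previous paragraph: carefully ruling out, or correctly interpreting, the ``partially collapsed'' configurations where some but not all of the four Farey graphs coincide, and making sure that in every such case we either land in a single Farey graph or derive a contradiction, so that a genuinely new geodesic $u-w'-v$ can \emph{only} arise from an honest alternating square or from everything being inside one Farey graph. Finally, to finish I would invoke the structure of an alternating $4$-tuple from the proof of Lemma~\ref{L:square}: writing $u = \{\alpha_1,\alpha_2\}\cup Q$, $w = \{\alpha_1',\alpha_2\}\cup Q$, $v = \{\alpha_1',\alpha_2'\}\cup Q$, the fourth vertex of the square is forced to be $w' = \{\alpha_1,\alpha_2'\}\cup Q$, and this is the \emph{unique} remaining vertex adjacent to both $u$ and $v$ via the two relevant Farey graphs — hence there are at most two length-two geodesics, completing the proof.
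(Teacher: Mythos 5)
Your proposal is correct in its overall structure but reaches the conclusion by a genuinely different route than the paper. You reduce everything to the classification of alternating $4$-tuples: any second midpoint $w'$ either forces the whole configuration into one Farey graph or completes $(u,w,v,w')$ to an alternating square, and then Aramayona's structure result (the one underlying Lemma~\ref{L:square}) pins down $w'$ uniquely as $Q\cup\{\alpha_1,\alpha_2'\}$, giving at most two geodesics. The paper instead works directly with the three candidate midpoints $Q\cup\{\alpha,\beta'\}$, $Q\cup\{\alpha,\alpha'\}$, $Q\cup\{\beta,\beta'\}$ and decides which are pants decompositions by intersection arguments, splitting on whether $(S-Q)_0$ has two complexity-$1$ components (square exists) or one complexity-$2$ component (no second geodesic, proved via a filling argument showing $\beta'$ must meet $\alpha$). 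Your route is cleaner in that it leans on an already-available classification and avoids the filling argument; the paper's route is more self-contained and yields strictly more information, namely exactly \emph{when} the second geodesic exists rather than only that it is unique if it exists. The case bookkeeping you anticipate does go through: $F_3=F_4$ is impossible because $u\cap v$ has deficiency $2$, and $F_2=F_3$ (resp.\ $F_1=F_4$) forces $w'=w$ or forces two intersecting curves into one multicurve.

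Two small slips should be repaired. First, two vertices at distance two in a Farey graph can have exactly \emph{one} common neighbor (e.g.\ $1/0$ and $1/3$ share only $0/1$), so ``exactly two geodesics'' should read ``one or two''; this does not affect the lemma, which allows either count. Second, in the single-Farey-graph case you must also rule out a second midpoint lying \emph{outside} that Farey graph; the paper does this by citing that Farey graphs are totally geodesic in $\MP(S_{g,n})$, and alternatively one can argue directly that any common neighbor of $u=Q'\cup\{a\}$ and $v=Q'\cup\{b\}$ must contain the deficiency-$1$ multicurve $Q'$, since otherwise it would contain both intersecting curves $a$ and $b$.
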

\begin{proof} Write $P_0, P_1,P_2$ for the vertices of the path of length $2$.  If all three vertices lie in a single Farey graph, then it is easy to see that there are either one or two geodesics between $P_0$ and $P_2$ in the Farey graph.  Because Farey graphs are totally geodesic \cite{APS}, there can be at most two geodesics between $P_0$ and $P_2$ in this case.

Therefore, we may assume that the path is not contained in a Farey graph, and hence there is a deficiency $2$ multicurve $Q$ and curves $\alpha,\beta,\alpha',\beta'$ so that
\[ P_0 = Q \cup \{\alpha,\beta\}, P_1 = Q \cup \{\alpha',\beta\}, P_2 = Q \cup \{ \alpha',\beta'\}.\]
If $S - Q$ has two components that are complexity $1$ subsurfaces, then the path is part of a square with vertices $P_0,P_1,P_2, P_1' = Q \cup \{ \alpha,\beta'\}$.
 In this case it is clear that these are the only two geodesics between $P_0$ and $P_2$.  

We are thus left to consider the situation that $S - Q$ has one component that is a complexity $2$ subsurface.  
In this case, it suffices to show that $Q \cup \{\alpha,\beta'\}$, $Q \cup \{\alpha,\alpha'\}$, and $Q \cup \{\beta,\beta'\}$ are not pants decompositions.  Since $\alpha$ and $\alpha'$ nontrivially intersect, as do $\beta$ and $\beta'$, the second and third are clearly not pants decompositions. To handle the first case, note that $\alpha'$ is disjoint from both $\beta$ and $\beta'$, but since $\beta'$ intersects the complexity $1$ component of $S - (Q \cup \beta)$, and $\alpha,\alpha'$ fills that component, we must have $\beta'$ intersects $\alpha$.  Therefore, $Q \cup \{\alpha,\beta'\}$ is not a pants decomposition either, completing the proof.
\end{proof}



We recall the construction of the finite rigid subgraph $X_{0,n}\subset\MP{(S_{0,n})}$ for $n\geq5$ in \cite{Rasimate1} as follow.  We view $S_{0,n}$ as being obtained by doubling a regular $n$-gon with all of its vertices removed.  For every arc in the regular $n$--gon connecting non-adjacent sides, the double of this arc is a curve in $S_{0,n}$.  We let $\Gamma_n$ denote the set of all such curves.
Let $Z_{0,n}$ be the subgraph of $\MP{(S_{0,n})}$ induced by the vertices corresponding to pants decompositions consisting of curves from $\Gamma_n$.  See Figure~\ref{F:S05} for the case $n = 5$.

\begin{figure}[ht]
\begin{center}
\includegraphics[height=9cm]{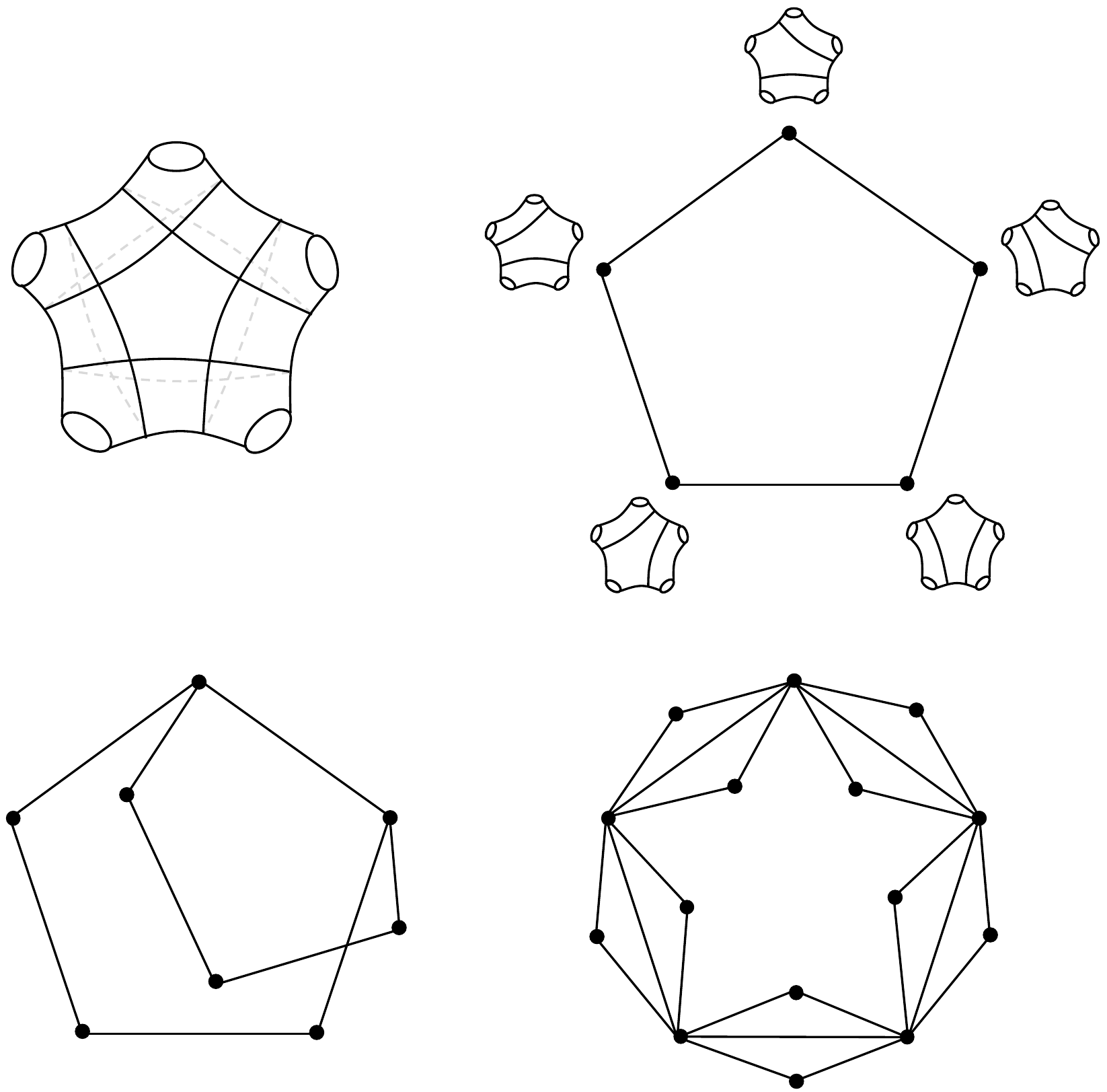} 
\caption{{\bf Top left:} Punctured sphere $S_{0,5}$ and the curves in $\Gamma_5$. {\bf Top right:} The alternating pentagon $Z_{0,5}$.   {\bf Bottom left:} $Z_{0,5} \cup T^{\frac{1}{2}}_\alpha(Z_{0,5})$.  {\bf Bottom right} The thick pentagon $\widehat{Z_{0,5}}$.} 
\label{F:S05}
\end{center}

\begin{picture}(0,0)(0,0)
\put(127,273){\small $\alpha$}
\put(108,238){\small $\beta$}
\put(138,207){\small $\gamma$}
\put(175,225){\small $\delta$}
\put(172,268){\small $\epsilon$}
\put(286,238){\small $Z_{0,5}$}
\put(150,120){\small $T^{1/2}_{\alpha}(Z_{0,5})$}

\end{picture}
\end{figure}

In Figure~\ref{F:S05} we have labeled $\Gamma_{5}=\{\alpha,\beta,\gamma,\delta,\epsilon\}$ and observe that $Z_{0,5}$ is an alternating pentagon. The subgraph
\[
X_{0,5}=Z_{0,5}\cup\bigcup_{c\in\Gamma_5} T_c^{\pm\frac{1}{2}}(Z_{0,5}),
\]
where $T_c^{\pm\frac{1}{2}}$ denotes the two simplicial maps on $\MP(S_{0,5})$ induced by the two half-twists around the curve $c$. The subgraph $X_{0,5}$ consists of the alternating pentagon $Z_{0,5}$ and ten of its images under half-twists. They create ten triangles attached to $Z_{0,5}$ and we call $Z_{0,5}$ together with these triangles, \textbf{thick pentagon} $\widehat{Z_{0,5}}$, see Figure~\ref{F:S05}.  The subgraph $X_{0,5}$ is obtained from $\widehat{Z_{0,5}}$ by adding 10 paths of length 2.  Moreover, if $X'$ is a subgraph of $\MP(S_{0,5})$ isomorphic to $X_{0,5}$ and contains $\widehat{Z_{0,5}}$, then $X' = X_{0,5}$ (see \cite[Lemma 3.3]{Rasimate1}).  In particular, if two vertices in $\widehat{Z_{0,5}}-Z_{0,5}$ are connected by a path of length $2$, that path shows up in $X_{0,5}$ and it is the unique geodesic in the pants graph.

For $n\geq 6$, let $W\subset \Gamma_n$ be a deficiency-$2$ multicurve such that $(S_{0,n}-W)_0\cong S_{0,5}$. Let $\Gamma^W_5$ be a subset of $\Gamma_n$ consisting of curves disjoint from all curves in $W$. There is a natural homeomorphism $h:S_{0,5} \to (S_{0,n}-W)_0$ such that $h(\Gamma_5)=\Gamma_5^W$, see~\cite[Lemma 3.1]{Rasimate1}. Let 
\[ X_{0,5}^W = h^W(X_{0,5}) = \{ h(u) \cup W \mid u \in X_{0,5} \},\]
where $h^W:P(S_{0,5})\to P(S_{0,n})$ is the induced map of $h$ defined by $h^W(u)=h(u)\cup W$. Thus $h^W$ determines an isomorphism $X_{0,5}^W\cong X_{0,5}$.   The set $X_{0,n}$ is then defined to be 
\[X_{0,n} = Z_{0,n} \cup \bigcup_W X^W_{0,5},\]
where the union is taken over all deficiency-$2$ multicurves in $\Gamma_n$ with a $5$-punctured sphere component.

\section{Extending finite rigidity for $\MP(S_{0,n})$}
\label{sec:finite rigidity of punctured sphere}
In~\cite[Theorem 1.1]{Rasimate1}, the third author showed that if $\phi:X_{0,n}\to \MP(S_{0,m})$ is an injective simplicial map, then there is a $\pi_1$-injective embedding $f:S_{0,n}\to S_{0,m}$ that induces $\phi$. In this section, we will show that this theorem can be extended so that $\phi$ is any injective simplicial map from $X_{0,n}$ to an arbitrary pants graph $\MP(S_{g,m})$, as opposed to that of a punctured sphere. We state the theorem as follow.

\begin{theorem}
[Main Theorem for $S_{0,n}$] \label{T:Main for S0n}
For $n\geq5$, there exists a finite subgraph $X_{0,n}\subset\MP(S_{0,n})$ such that for any surface $S_{g,m}$ and any injective simplicial map
\[
\phi:X_{0,n}\to\MP(S_{g,m}),
\]
there exists a deficiency $n-3$ multicurve $Q$ and homeomorphism $f:S_{0,n}\to (S_{g,m}-Q)_0$ so that $f^Q|_{X_{0,n}} = \phi$.  Moreover, $f$ is unique up to isotopy. 
\end{theorem}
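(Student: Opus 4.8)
The plan is to leverage the known finite rigidity of $X_{0,n}$ inside punctured spheres (\cite[Theorem 1.1]{Rasimate1}) by showing that the image $\phi(X_{0,n})$ must in fact land inside the pants subgraph $\MP_Q(S_{g,m})$ for some multicurve $Q$ whose complement has a $5$-punctured (resp.\ $n$-punctured) sphere component, and that this subgraph behaves like $\MP(S_{0,n})$ for the purposes of detecting $X_{0,n}$. The combinatorial backbone of $X_{0,n}$ is the alternating pentagon $Z_{0,5}$, its thickenings $\widehat{Z_{0,5}}$, and the copies $X_{0,5}^W$ glued along curves of $\Gamma_n$; so it suffices to understand where a single thick pentagon can go under $\phi$.

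First I would analyze the image of an alternating pentagon. Given an alternating $5$-tuple $(v_1,\dots,v_5)$ in $Z_{0,5}$, its image $(\phi(v_1),\dots,\phi(v_5))$ is a circuit of length $5$ in $\MP(S_{g,m})$; by Lemma~\ref{L:circuits to alternating tuples} it is either contained in a single Farey graph (impossible, since an alternating $5$-tuple is not contained in a Farey graph and $\phi$ is injective simplicial, so it would force a shorter alternating tuple, contradicting Lemma~\ref{L:no alternating 2-tuple}) or it is itself an alternating $k$-tuple with $k \le 5$; by Lemma~\ref{L:no alternating 2-tuple} we get $k \in \{4,5\}$, and a short argument (a pentagon cannot contain an alternating $4$-tuple as a proper sub-tuple while remaining a pentagon, using Lemma~\ref{L:square} and the edge count) rules out $k=4$, so $\phi(Z_{0,5})$ is again an alternating pentagon. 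Then, using that $\widehat{Z_{0,5}}$ has exactly ten triangles attached and that each edge of $\MP(S_{g,m})$ lies in exactly two triangles, I would show $\phi(\widehat{Z_{0,5}})$ is again a thick pentagon, and by the rigidity statement recalled from \cite[Lemma 3.3]{Rasimate1} — that a subgraph isomorphic to $X_{0,5}$ containing $\widehat{Z_{0,5}}$ equals $X_{0,5}$, together with Lemma~\ref{L:length2} identifying the ten length-$2$ paths as unique geodesics — conclude that $\phi(X_{0,5})$ is the set $X_{0,5}$ associated to some subsurface. The key geometric input here is that a thick pentagon in any pants graph must come from an $S_{0,5}$ or $S_{1,1}$-type configuration via Aramayona's analysis of alternating circuits, and $S_{1,1}$ is excluded because it has complexity $1$ and cannot support a pentagon; I expect verifying this case analysis cleanly (for pentagons rather than squares) to be the main obstacle.

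Second, having located each $X_{0,5}^W$, I would reconstruct the subsurface and the multicurve $Q$. For each deficiency-$2$ multicurve $W\subset\Gamma_n$ with $(S_{0,n}-W)_0\cong S_{0,5}$, the previous step gives a $5$-punctured-sphere subsurface $\Sigma_W\subset S_{g,m}$ and a multicurve $W'$ (the "background" curves, analogous to $W$) with $\phi(X_{0,5}^W) = X_{0,5}^{W'}$; by \cite[Theorem 1.1]{Rasimate1} there is a homeomorphism $f_W:(S_{0,n}-W)_0 \to \Sigma_W$ inducing $\phi$ on $X_{0,5}^W$, unique up to isotopy. The compatibility of these pieces along shared curves of $\Gamma_n$ — two multicurves $W_1,W_2$ with $X_{0,5}^{W_1}\cap X_{0,5}^{W_2}$ containing a common edge, hence a common Farey graph, hence a common deficiency-$1$ multicurve — forces the $f_{W_i}$ to agree on overlaps (using uniqueness up to isotopy and that $Z_{0,n}$ is connected and "rigidly connects" the pieces). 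Gluing yields $f:S_{0,n}\to (S_{g,m}-Q)_0$ with $Q = \bigcap_W W'$ of deficiency $n-3$, and $f^Q|_{X_{0,n}} = \phi$ follows piece-by-piece since it holds on each $X_{0,5}^W$ and on $Z_{0,n}$. Uniqueness up to isotopy of $f$ is inherited from the uniqueness on each $X_{0,5}^W$ (there is no hyperelliptic ambiguity for $S_{0,5}$), since a homeomorphism of $S_{0,n}$ restricting to the identity on enough copies of $S_{0,5}$ is isotopic to the identity.

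The expected main obstacle, as noted, is the "thick pentagon is rigid in an arbitrary pants graph" step: Aramayona's \cite[Lemma 8]{Aramayona} and our Lemma~\ref{L:square} handle alternating squares, but pentagons require either bootstrapping from squares (each pair of non-adjacent vertices in the pentagon, together with intermediate vertices, may form squares) or a direct argument that an alternating pentagon and its ten attached triangles pin down a deficiency-$(n-3)$ multicurve with an $S_{0,5}$ component. A secondary subtlety is ensuring the embedding $f$ is $\pi_1$-injective, i.e.\ that $Q$ contains no curves bounding once-holed disks with $\partial\Sigma_W$; this should follow from essentiality of all curves involved and the fact that $f(S_{0,n})$ is a genuine component of the complement of $Q$.
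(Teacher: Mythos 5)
Your proposal takes essentially the same route as the paper: the only genuinely new step is showing that $\phi(Z_{0,5})$ is an alternating pentagon in an arbitrary target pants graph (via Lemmas~\ref{L:circuits to alternating tuples}, \ref{L:no alternating 2-tuple} and \ref{L:square}, ruling out the four-Farey-graph case exactly as you indicate), after which the paper invokes Margalit's characterization of alternating pentagons and simply cites the inductive gluing argument of \cite{Rasimate1}, which you re-sketch. One small correction: the single-Farey-graph case is excluded not because it would ``force a shorter alternating tuple'' (an image lying in one Farey graph produces no alternating tuple at all), but because the attached triangles of $\widehat{Z_{0,5}}$ cannot be accommodated there, which is how the paper argues.
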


The proof of the main theorem in \cite{Rasimate1} is an induction on $n$.  The assumption that the target pants graph was that of a punctured sphere was only used in the base case, $n = 5$; see \cite[Lemma 4.1]{Rasimate1}.  The proof in that case hinges on showing that for any simplicial embedding of $X_{0,5}$ into a pants graph, the image of $Z_{0,5}$ is an alternating pentagon (this is then necessarily determined by a deficiency two multicurve with a single complementary component that is a five-holed sphere subsurface; see Margalit \cite[Lemma 8]{Mar}).  To prove Theorem~\ref{T:Main for S0n}, we thus need only prove the following lemma.

\begin{lemma}
Let $\phi:X_{0,5}\to \MP({S_{g,n}})$ be an injective simplicial map. Then $\phi$ maps the core pentagon $Z_{0,5}\subset X_{0,5}$ to an alternating pentagon in $\MP(S_{g,n})$. 
\end{lemma}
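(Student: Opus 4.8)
We want to show that an injective simplicial map $\phi:X_{0,5}\to\MP(S_{g,n})$ sends the core pentagon $Z_{0,5}$ to an alternating pentagon. The overall strategy is to use the rigid combinatorial structure of $X_{0,5}$ — the thick pentagon $\widehat{Z_{0,5}}$ together with the ten length-$2$ paths — to pin down, edge by edge, how consecutive edges of $\phi(Z_{0,5})$ sit relative to each other in the Farey-graph structure of the target.

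Let me think about what we know. $Z_{0,5}$ is an alternating pentagon with vertices $v_1,\dots,v_5$ and consecutive edges lying in distinct Farey graphs $F_1,\dots,F_5$ of $\MP(S_{0,5})$. Each edge $v_iv_{i+1}$ is one of two triangles in $F_i$, and $X_{0,5}$ contains, for each such edge, the "extra" vertices — the half-twist images — which complete the edge to a triangle $T_i$ in the source. So $\widehat{Z_{0,5}}$ consists of the pentagon plus a triangle hanging off each of its five edges. Also $X_{0,5}$ contains $10$ extra length-$2$ paths among vertices of $\widehat{Z_{0,5}}-Z_{0,5}$.

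Under $\phi$: each edge $\phi(v_i)\phi(v_{i+1})$ lies in some Farey graph $F_i'$ of $\MP(S_{g,n})$; since each edge of any pants graph lies in a unique Farey graph, this $F_i'$ is well-defined. We must show $F_i'\neq F_{i+1}'$ for all $i$. Suppose not: two consecutive images $\phi(v_{i-1})\phi(v_i)$ and $\phi(v_i)\phi(v_{i+1})$ lie in the same Farey graph $F'$.

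**The key step.**

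The plan is: first, argue that $\phi$ must send each source triangle $T_i$ to a genuine triangle in $\MP(S_{g,n})$ (it's a $3$-cycle, and simplicial embeddings send $3$-cycles to $3$-cycles — but I need to rule out degeneration, which injectivity handles), so each edge $\phi(v_i)\phi(v_{i+1})$ together with the image of the half-twist vertex forms one of the two triangles in $F_i'$. Second, use Lemma~\ref{L:no alternating 2-tuple} (no alternating $2$- or $3$-tuples) and Lemma~\ref{L:circuits to alternating tuples}: the image of $Z_{0,5}$ is a $5$-circuit, so either it lies entirely in one Farey graph or it yields an alternating $k$-tuple for some $k\le 5$. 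I would first rule out the "entirely in one Farey graph" case: a Farey graph contains no $5$-circuit that bounds the combinatorial configuration forced by $\widehat{Z_{0,5}}$ plus the ten extra paths — more precisely, in a Farey graph every edge is in exactly two triangles and the local structure is too constrained to support the image of the whole of $X_{0,5}$ injectively. Then, since there's no alternating $2$- or $3$-tuple, the only alternating tuples available from a $5$-circuit are alternating $4$-tuples and alternating $5$-tuples; I must exclude the $4$-tuple case, i.e. I must show no two consecutive edges of $\phi(Z_{0,5})$ collapse into a single Farey graph, because then the remaining three edges would have to form (with the collapsed one) an alternating $4$-tuple, and I need to derive a contradiction with the presence of the half-twist triangles and the ten length-$2$ paths.

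For the $4$-tuple exclusion — which I expect to be the heart of the argument — suppose $\phi(v_5),\phi(v_1),\phi(v_2)$ all lie in a common Farey graph $F'$ (so the edges $v_5v_1$ and $v_1v_2$ collapse). Then $\phi(v_2),\phi(v_3),\phi(v_4),\phi(v_5)$ (or some relabeling) form an alternating $4$-tuple, and by Lemma~\ref{L:square} the two "side" Farey graphs are identified by a simplicial bijection. Now I bring in the triangles $T_5, T_1$ hanging off edges $v_5v_1$ and $v_1v_2$: their images are triangles in $F'$, so the half-twist vertices map into $F'$ as well. The point is that the extra structure — five triangles and ten paths of length $2$, all embedded injectively — forces a configuration inside $F'$ (a Farey graph) that is impossible: Farey graphs have no $4$-cycles, and the collapsed portion would have to contain too many vertices at bounded distance from $\phi(v_1)$ sharing triangles in a tree-like-at-large-scale graph. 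Concretely, I would count: the neighbors of $\phi(v_1)$ coming from $v_2, v_5$, and the two half-twist vertices on those edges, all lie in $F'$, and the length-$2$ paths among $\widehat{Z_{0,5}}-Z_{0,5}$ connecting vertices near $v_1$ to vertices near $v_2$ or $v_5$ would have to be realized in $F'$ — but by Lemma~\ref{L:length2}, a length-$2$ geodesic in a Farey graph is determined (one or two of them), and tracing which vertices these ten paths connect gives more independent length-$2$ relations than a single Farey graph can accommodate around $\phi(v_1)$, contradiction.

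**Where the difficulty lies.**

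The main obstacle is the $4$-tuple exclusion: ruling out that two adjacent edges of $\phi(Z_{0,5})$ merge into one Farey graph. This is exactly where the genus-zero-target hypothesis was used in \cite{Rasimate1} via the explicit curve combinatorics, and here we must replace it by a purely graph-theoretic argument inside an arbitrary pants graph. The tools available are Lemmas~\ref{L:circuits to alternating tuples}, \ref{L:no alternating 2-tuple}, \ref{L:square}, and \ref{L:length2}, together with the rigidity statement that $X_{0,5}$ is reconstructed from $\widehat{Z_{0,5}}$ (from \cite[Lemma 3.3]{Rasimate1}). I expect the cleanest route is: (i) observe $\phi$ sends the ten length-$2$ paths to geodesics (injectivity plus the fact that they are geodesics in the source and simplicial maps are $1$-Lipschitz, combined with knowing the endpoints are at distance $2$ in the image — this last needs a small argument), (ii) invoke Lemma~\ref{L:length2} to conclude each such image path is either the unique length-$2$ geodesic or part of an alternating square or inside a Farey graph, (iii) show that the assumption of a collapsed pair of edges of $\phi(Z_{0,5})$ forces one of these ten image paths into a configuration violating (ii), since in a Farey graph the relevant pairs of vertices would be at distance $\neq 2$ or would not support the required branching. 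Carrying out (iii) requires carefully tracking the ten paths against the five edges of the pentagon and is the computational core; I would organize it as a short finite case-check using the dihedral symmetry of the pentagon to reduce to one or two representative cases.
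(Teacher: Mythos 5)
Your case structure matches the paper's: rule out the image pentagon lying in one Farey graph, use Lemma~\ref{L:no alternating 2-tuple} to rule out two or three Farey graphs, and then exclude the remaining possibility of four Farey graphs (an alternating $4$-tuple). The first three steps are fine. The problem is that the decisive fourth step --- which you yourself identify as ``the heart of the argument'' --- is not actually carried out. You propose to derive a contradiction by pushing the ten attached triangles and the ten length-$2$ paths of $X_{0,5}$ into the collapsed Farey graph and then ``counting'' that a Farey graph cannot accommodate ``more independent length-$2$ relations than a single Farey graph can accommodate around $\phi(v_1)$.'' That claim is never substantiated, the relevant finite case-check is explicitly deferred, and it is not clear it would close as stated: for instance, the four attached triangles on the two collapsed edges alone \emph{do} embed injectively in a Farey graph (take the edges $\{\infty,0\}$ and $\{\infty,3\}$, whose triangle apexes $\{-1,1\}$ and $\{2,4\}$ are all distinct), so any contradiction must come from a careful analysis of the length-$2$ paths, which you have not performed. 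Step (i) of your outline (that the images of the ten length-$2$ paths are geodesics) is also flagged by you as needing an argument and is not supplied.

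Moreover, you walk right past the much shorter argument that the paper uses and that is already fully available from the lemmas you cite. If two consecutive edges of the pentagon, say $[v_4,v_5]$ and $[v_5,v_1]$, map into one Farey graph, then $(\phi(v_1),\phi(v_2),\phi(v_3),\phi(v_4))$ is an alternating $4$-tuple, and Lemma~\ref{L:square} gives a simplicial bijection between the Farey graph containing $\phi(v_2),\phi(v_3)$ and the one containing $\phi(v_1),\phi(v_4)$ matching these pairs up; in particular the two pairs must be joined by the same number of paths of each length. But $\phi(v_2)$ and $\phi(v_3)$ span an edge, while $\phi(v_1)$ and $\phi(v_4)$ are the endpoints of the collapsed arc through $\phi(v_5)$ and are not adjacent (adjacency would make $\phi(v_1)$ the apex of one of the only two triangles on the edge $\phi(v_4)\phi(v_5)$, conflicting with the injective images of the attached triangles on that edge). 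This is the contradiction; no bookkeeping of the ten length-$2$ paths is needed. As written, your proposal is a plan with the essential step missing rather than a proof.
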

\begin{proof}
The subgraph $\phi(Z_{0,5})$ is a pentagon in $\MP(S_{g,n})$. The triangles attached to the core pentagon $Z_{0,5}$ prevent $\phi(Z_{0,5})$ from being contained in a single Farey graph. By Lemma~\ref{L:no alternating 2-tuple}, there are no alternating $2$- and $3$-tuples, so $\phi(Z_{0,5})$ cannot be contained in two or three different Farey graphs. Suppose $\phi(Z_{0,5})$ is contained in four different Farey graphs. Then four vertices in the subgraph $\phi(Z_{0,5})$ form an alternating $4$-tuple, which must be as in Figure~\ref{F:impossible square0}, up to symmetry. From the figure, there is an edge connecting $\phi(v_2)$ and $\phi(v_3)$ but no edge connecting $\phi(v_1)$ and $\phi(v_4)$, which is a contradiction to Lemma~\ref{L:square}. We conclude that $\phi(Z_{0,5})$ is an alternating pentagon.
\end{proof}

\begin{figure}[ht]
\begin{center}
\includegraphics[height=4cm]{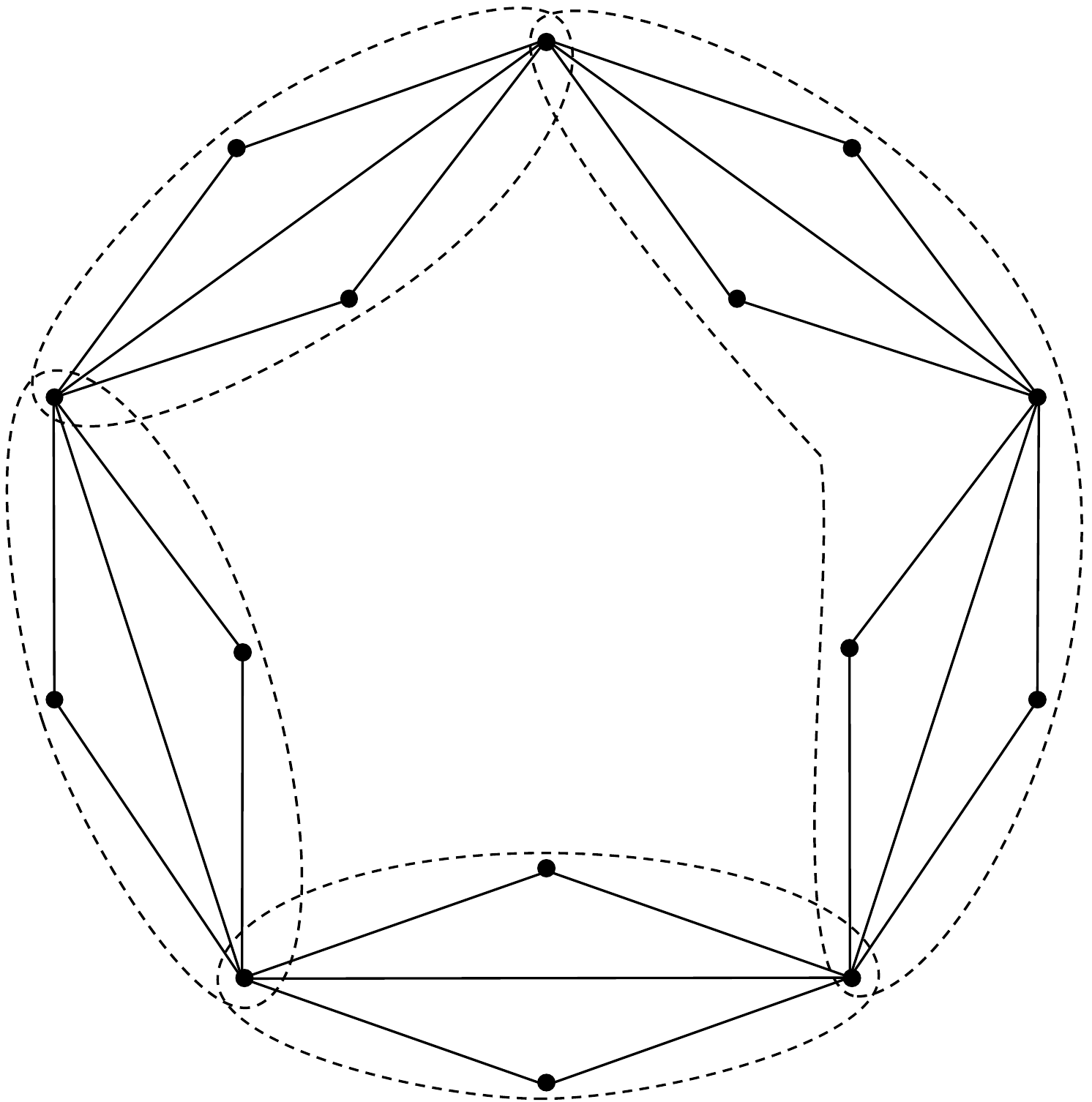} 
\caption{An impossible alternating 4-tuples. Each dashed circle represent a Farey graph.} 
\label{F:impossible square0}
\end{center}

\begin{picture}(0,0)(0,0)
\put(225,160){\small $\phi(v_1)$}
\put(160,125){\small $\phi(v_2)$}
\put(175,50){\small $\phi(v_3)$}
\put(275,50){\small $\phi(v_4)$}
\put(290,125){\small $\phi(v_5)$}
\end{picture}
\end{figure}

The proof of Theorem~\ref{T:Main for S0n} now follows exactly as in \cite{Rasimate1}, and we do not repeat the argument here. \qed

\section{Twice-punctured torus $S_{1,2}$}
\label{sec:S12}
In this section, we construct the finite subgraph $X_{1,2}  \subset \MP(S_{1,2})$ and prove that it is rigid.
We begin by recalling Margalit's definition of the (thickened) almost alternating hexagon $Z_{1,2}$ from~\cite{Mar} and then construct $X_{1,2}$ as a slight enlargement of it.
\begin{definition}
A triple $(v_1,v_2,v_3)$ is called a \textbf{quadrilateral triple} if the three vertices are in a common quadrilateral in a Farey graph and are not in a common triangle. We write the triple so that $v_1$ and $v_3$ are in different triangles and call them the \textbf{outer points} of quadrilateral triple. The vertex $v_2$ is adjacent to both of the endpoints and is called \textbf{the central point} of the quadrilateral triple.
\end{definition}
If $u$ and $v$ are the outer points of a quadrilateral triple, then there is exactly one other quadrilateral triple for which $u$ and $v$ are outer points, and the two central points for the two triples are adjacent to each other.
\begin{definition} An \textbf{almost alternating hexagon} is an alternating $5$-tuple $(v_1,...,v_5)$ having the following properties;
\begin{enumerate}
\item 
$v_i$ and $v_{i+1}$ are adjacent, for $i=1,...,4, and$
\item
$v_1$ and $v_5$ are outer points of a quadrilateral triple.
\end{enumerate}
We also call the subgraph spanned by $v_1,...,v_5$ together with both possible central points of $v_1$ and $v_5$, a \textbf{thickened almost alternating hexagon}.  
\end{definition}
To construct an almost alternating hexagon, we proceed as follows.  Let $\Gamma=\{\beta_1,\beta_2,\beta_3,\beta_4,\beta_5\}$ be the set of curves on $S_{1,2}$ shown in Figure~\ref{F:S12}. The curves in $\Gamma$ generate an almost alternating hexagon $(v_1,...,v_5)$ having  $v_6$ (or $v_7$) as the central point of a quadrilateral triple, as shown in the figure.
Let $Z_{1,2}$ be the subgraph spanned by the vertices $v_i, i=1,...,7$.  By the definition, $Z_{1,2}$ is a thickened almost alternating hexagon. 
\begin{figure}[ht]
\begin{center}
\includegraphics[height=6.5cm]{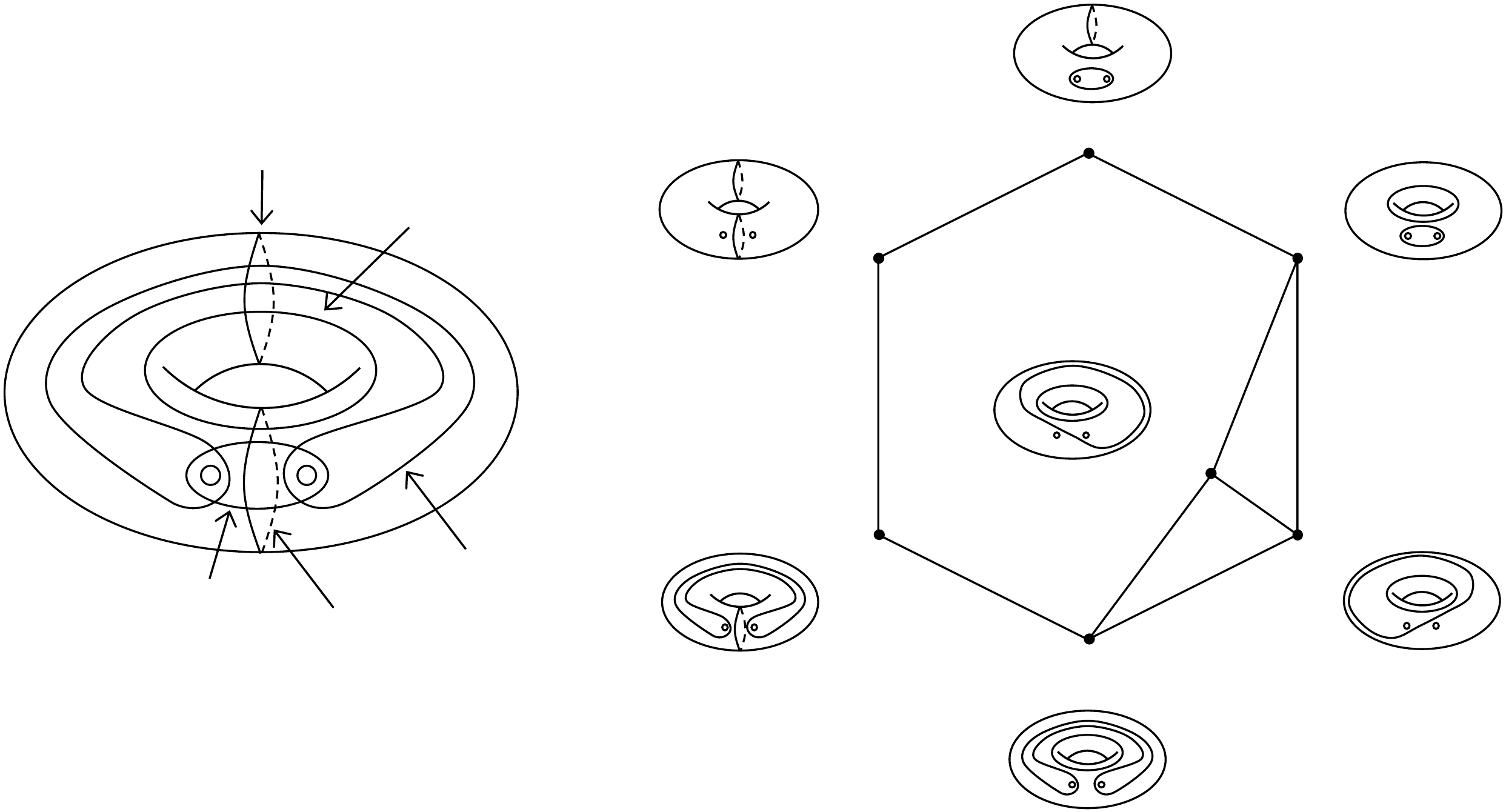} 
\caption{{\bf Left:} A surface $S_{1,2}$ and the curves generating the almost alternating hexagon $Z_{1,2}$.  {\bf Right:} The almost alternating hexagon $Z_{1,2}$.} 
\label{F:S12}
\end{center}
\begin{picture}(0,0)(0,0)
\put(105,100){$\small\beta_1$}
\put(155,192){$\small\beta_2$}
\put(115,205){$\small\beta_3$}
\put(138,95){$\small\beta_4$}
\put(168,107){$\small\beta_5$}
\put(360,183){$\small v_1$}
\put(305,210){$\small v_2$}
\put(250,183){$\small v_3$}
\put(250,114){$\small v_4$}
\put(307,87){$\small v_5$}
\put(360,114){$\small v_6$}
\put(328,135){$\small v_7$}
\end{picture}
\end{figure}
The following lemma, proved by Margalit in \cite[Lemma 9]{Mar}, says that all almost alternating hexagons look like this, up to homeomorphism.
\begin{lemma}
\label{L:almost alternating hexagon} 
Let $Z_{1,2}\subset\MP(S_{1,2})$ be the thickened almost alternating hexagon defined in the previous example and let $\phi:Z_{1,2}\to\MP(S_{g,n})$ be an injective simplicial map. If $\phi(Z_{1,2})$ is a thickened almost alternating hexagon, then there exists a deficiency-2 multicurve $Q$ and a homeomorphism $f \colon S_{1,2}\to (S_{g,n}-Q)_0$ that induces $\phi$. 
\end{lemma}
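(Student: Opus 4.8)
The plan is to leverage Margalit's classification of almost alternating hexagons in pants graphs (\cite[Lemma 9]{Mar}), which asserts that any almost alternating hexagon in $\MP(S)$ is determined by a deficiency-$2$ multicurve $Q$ whose unique nontrivial complementary component is homeomorphic to $S_{1,2}$. The hypothesis hands us that $\phi(Z_{1,2})$ is a thickened almost alternating hexagon, so in particular its underlying alternating $5$-tuple $(\phi(v_1),\ldots,\phi(v_5))$ together with the two central points $\phi(v_6),\phi(v_7)$ realizes this combinatorial configuration inside $\MP(S_{g,n})$. First I would invoke Margalit's structural result to extract the deficiency-$2$ multicurve $Q \subset S_{g,n}$ with $(S_{g,n}-Q)_0 \cong S_{1,2}$ such that every $\phi(v_i)$ contains $Q$, i.e. $\phi(v_i) = w_i \cup Q$ where $w_i$ is a pants decomposition of the $S_{1,2}$-component, and such that $(w_1,\ldots,w_5)$ together with $w_6,w_7$ is an almost alternating hexagon in $\MP((S_{g,n}-Q)_0)$ with the same combinatorics as $Z_{1,2}$.

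Next I would build the homeomorphism $f \colon S_{1,2} \to (S_{g,n}-Q)_0$. Since $Z_{1,2}$ is generated by the explicit curves $\Gamma = \{\beta_1,\ldots,\beta_5\}$ of Figure~\ref{F:S12}, and since by the previous paragraph $\phi(Z_{1,2})$ is an almost alternating hexagon of exactly the same shape inside $\MP((S_{g,n}-Q)_0) \cong \MP(S_{1,2})$, I would apply Margalit's uniqueness statement (or equivalently the fact that $\Mod^{\pm}(S_{1,2})$ acts transitively on almost alternating hexagons with a fixed combinatorial type) to produce a homeomorphism $f\colon S_{1,2} \to (S_{g,n}-Q)_0$ carrying the curves generating $Z_{1,2}$ to the curves generating $\phi(Z_{1,2})$, and hence carrying $Z_{1,2}$ onto $\phi(Z_{1,2})$ with $f(v_i) = w_i$ for all $i$. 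Because the curves in $\Gamma$ fill $S_{1,2}$, the hexagon determines $f$ on enough of the surface that $f$ is indeed a homeomorphism (not merely a combinatorial matching); this is where the explicit choice of $\Gamma$ in the construction of $Z_{1,2}$ does its work.

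Finally I would check that $f$ induces $\phi$ in the sense defined after the Main Theorem: setting $f^Q(u) = f(u) \cup Q$ for pants decompositions $u$ of $S_{1,2}$, we have $f^Q(v_i) = f(v_i) \cup Q = w_i \cup Q = \phi(v_i)$ for each vertex $v_i$ of $Z_{1,2}$, and since $Z_{1,2}$ is an induced subgraph on these seven vertices, $f^Q|_{Z_{1,2}} = \phi$ as simplicial maps. I expect the main obstacle to be verifying that the combinatorial data "$\phi(Z_{1,2})$ is a thickened almost alternating hexagon" really is enough to pin down $Q$ and $f$ — that is, unpacking Margalit's Lemma~9 and confirming that the thickening (the two central points $v_6, v_7$) rules out the degenerate configurations and forces the complementary component to be $S_{1,2}$ rather than, say, fitting inside a Farey graph or spreading across more Farey graphs than allowed. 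Once that rigidity is in hand the construction of $f$ and the verification that it induces $\phi$ are essentially bookkeeping.
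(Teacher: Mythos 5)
Your proposal is correct and follows the same route as the paper, which in fact offers no independent proof of this lemma but simply attributes it to Margalit's \cite[Lemma 9]{Mar}; your appeal to that classification to extract the deficiency-$2$ multicurve $Q$ with $(S_{g,n}-Q)_0\cong S_{1,2}$ and then to the uniqueness-up-to-homeomorphism of such configurations to build $f$ is exactly the intended content of that citation. The remaining verification that $f^Q|_{Z_{1,2}}=\phi$ is, as you say, bookkeeping, and the finer question of how unique $f$ is deferred in the paper to Lemma~\ref{L:f for TAAHs}.
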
 
The multicurve $Q$ in this lemma is the maximal multicurve in common to all the pants decompositions in the image $\phi(Z_{1,2})$, and hence is uniquely determined by $\phi$.  The next lemma describes the extent to which $f$ is unique.
\begin{lemma} \label{L:f for TAAHs}
The homeomorphism $f$ from Lemma~\ref{L:almost alternating hexagon} is unique up to isotopy and possibly precomposing with the hyperelliptic involution of $S_{1,2}$.
\end{lemma}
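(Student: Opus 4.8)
The plan is to analyze the mapping class group of $S_{1,2}$ and show that the only ambiguity in recovering $f$ from $\phi$ comes from homeomorphisms of $S_{1,2}$ fixing the vertex set $\{v_1,\dots,v_7\}$ of $Z_{1,2}$ (as labelled pants decompositions, hence fixing each $v_i$ as a vertex of $\MP(S_{1,2})$). Indeed, if $f$ and $f'$ both satisfy the conclusion of Lemma~\ref{L:almost alternating hexagon}, then since the multicurve $Q$ is intrinsically determined by $\phi$ (it is the maximal multicurve common to all pants decompositions in $\phi(Z_{1,2})$), both $f$ and $f'$ have image the same subsurface $(S_{g,n}-Q)_0$, and $h = (f')^{-1}\circ f$ is a self-homeomorphism of $S_{1,2}$ inducing the identity on $Z_{1,2}$. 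So the lemma reduces to the claim: \emph{the subgroup of $\Mod^\pm(S_{1,2})$ fixing every vertex of $Z_{1,2}$ is generated by the hyperelliptic (elliptic) involution.}

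First I would set up the curves: recall from Figure~\ref{F:S12} the five curves $\beta_1,\dots,\beta_5$ and note which of the $v_i$ are pants decompositions built from which $\beta_j$; in particular each $\beta_j$ appears in some $v_i$, so a homeomorphism fixing all $v_i$ must permute the $\beta_j$ in a way compatible with membership in each $v_i$, and I would check that this forces each $\beta_j$ to be fixed (as an unoriented curve). This shows $h$ fixes a large enough collection of curves to pin it down: the curves $\beta_1,\dots,\beta_5$ (together with the boundary/puncture data) fill $S_{1,2}$, or at least cut it into disks and peripheral pieces, so by the Alexander method $h$ is determined up to isotopy by its action on this curve system together with orientation data at the intersection points. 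A homeomorphism fixing each $\beta_j$ setwise is then either isotopic to the identity or to a fixed involution; the involution that fixes each $\beta_j$ and each puncture is precisely the hyperelliptic involution $\iota$ of $S_{1,2}$ (the one lifting the hyperelliptic involution of the torus and swapping the sheets, realized as the $\pi$-rotation in the standard picture — it fixes each $\beta_j$ because they are symmetric under it). Hence $h \in \{1,\iota\}$ up to isotopy, giving the statement.

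The main obstacle is the bookkeeping in the Alexander-method step: I need to be careful that fixing the $\beta_j$ \emph{setwise} does not already force $h$ to be isotopic to the identity — i.e., that $\iota$ is genuinely nontrivial in $\Mod^\pm(S_{1,2})$ and genuinely fixes all of $Z_{1,2}$, not just $\Gamma$. For the former, $\iota$ acts nontrivially on homology (or: it does not fix pointwise a filling arc system), so it is nontrivial. For the latter, I would observe that $\iota$ fixes each $\beta_j$, hence fixes each pants decomposition $v_1,\dots,v_5$ and, since the two central points $v_6,v_7$ of the quadrilateral triple are intrinsically characterized by $v_1$ and $v_5$ (as noted after the definition of quadrilateral triple), $\iota$ fixes $v_6$ and $v_7$ as well — so $\iota$ indeed induces the identity on $Z_{1,2}$ and is a legitimate source of ambiguity. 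A secondary check is that no \emph{orientation-reversing} homeomorphism fixes all of $Z_{1,2}$: an orientation-reversing map would reverse the orientation of each Farey graph $F_i$ containing an edge of $Z_{1,2}$, hence could not fix all vertices while respecting the cyclic structure; this rules out that case. Assembling these points gives $h$ isotopic to $1$ or $\iota$, completing the proof.
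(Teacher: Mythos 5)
Your proposal is correct and follows essentially the same route as the paper: reduce to the stabilizer in $\Mod^\pm(S_{1,2})$ of the curve system $\{\beta_1,\dots,\beta_5\}$, identify it by an Alexander-method type argument as (a subgroup of) $\mathbb{Z}/2\times\mathbb{Z}/2$, and then exclude the orientation-reversing element using the vertices of $Z_{1,2}$. The paper makes your somewhat loose ``cyclic structure'' remark precise in the cleanest way: the orientation-reversing involution preserving each $\beta_i$ necessarily swaps the two central points $v_6$ and $v_7$ of the quadrilateral, so it does not act trivially on $Z_{1,2}$, leaving only the identity and the hyperelliptic involution.
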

\begin{proof} There are five subgraphs $B_1,\ldots,B_5 \subset Z_{1,2}$ contained in five distinct Farey graphs of $\MP(S_{1,2})$ determined by $\beta_1,\ldots,\beta_5$, respectively.  Specifically, $B_i = \MP_{\beta_i}(S_{1,2}) \cap Z_{1,2}$ is the subgraph spanned by vertices that contain $\beta_i$ (so $B_2$ is the union of two triangles, and every other $B_i$ is a single edge).  The unique Farey graph containing $\phi(B_i)$ is the one induced by $Q \cup f(\beta_i)$.  If $f' \colon S_{1,2} \to (S_{g,n}-Q)_0$ is any other homeomorphism with $f'^Q|_{Z_{1,2}} = \phi$, then $f'(\beta_i) = f(\beta_i)$.  Any two such maps $f$ and $f'$ must differ by (an isotopy and) an element of the subgroup $G < \Mod^\pm(S_{1,2})$ consisting of elements that leave each $\beta_i$ invariant.  Since $\beta_2$ and $\beta_3$ intersect in a single point, and $G$ must preserve this intersection point, it is easy to see that $G$ is isomorphic to a subgroup of $\mathbb Z/2 \mathbb Z \times \mathbb Z/2 \mathbb Z$, generated by the hyperelliptic involution and an orientation reversing involution fixing the curves $\beta_3$ and $\beta_4$ pointwise.  The orientation reversing involution switches $v_6$ and $v_7$, though, and so $f$ and $f'$ are either equal, or differ by precomposing by the hyperelliptic involution.
\end{proof}

Our goal now is to produce an enlargement $X_{1,2}$ of $Z_{1,2}$ with the property that for any simplicial embedding $\phi \colon X_{1,2} \to \MP(S_{g,n})$, the subgraph $\phi(Z_{1,2})$ is a thickened almost alternating hexagon. The definition of $X_{1,2}$ is as follows. 

We start with $Z_{1,2}$ in Figure~\ref{F:S12}. Next, we apply half-twists $T^{\pm 1}_{\beta_1}$ and $T^{\pm 1}_{\beta_5}$ to $Z_{1,2}$. The four images of $Z_{1,2}$ under these half-twists are shown in Figure~\ref{F:hexagon1}. Each of the four images of $v_3$ under these twists is connected to $v_3$ by an edge {\em not} contained in $Z_{1,2}$ or any of its images under these four half twists.  We define the subgraph $X_{1,2}$ to be the union of $Z_{1,2}$ with its four images under the twists, and the four edges from $v_3$ to each of its four images; see Figure~\ref{F:X12}.  Note that there are $14$ triangles in $X_{1,2}$ contained in three Farey graphs: one {\bf large Farey graph} determined by $v_1,v_5$, and two {\bf small Farey graphs} determined by $v_2,v_3$ and $v_3,v_4$. 
\begin{figure}[htb]
\begin{center}
\begin{tikzpicture}
\node at (-1,0) {\includegraphics[width=16.5cm]{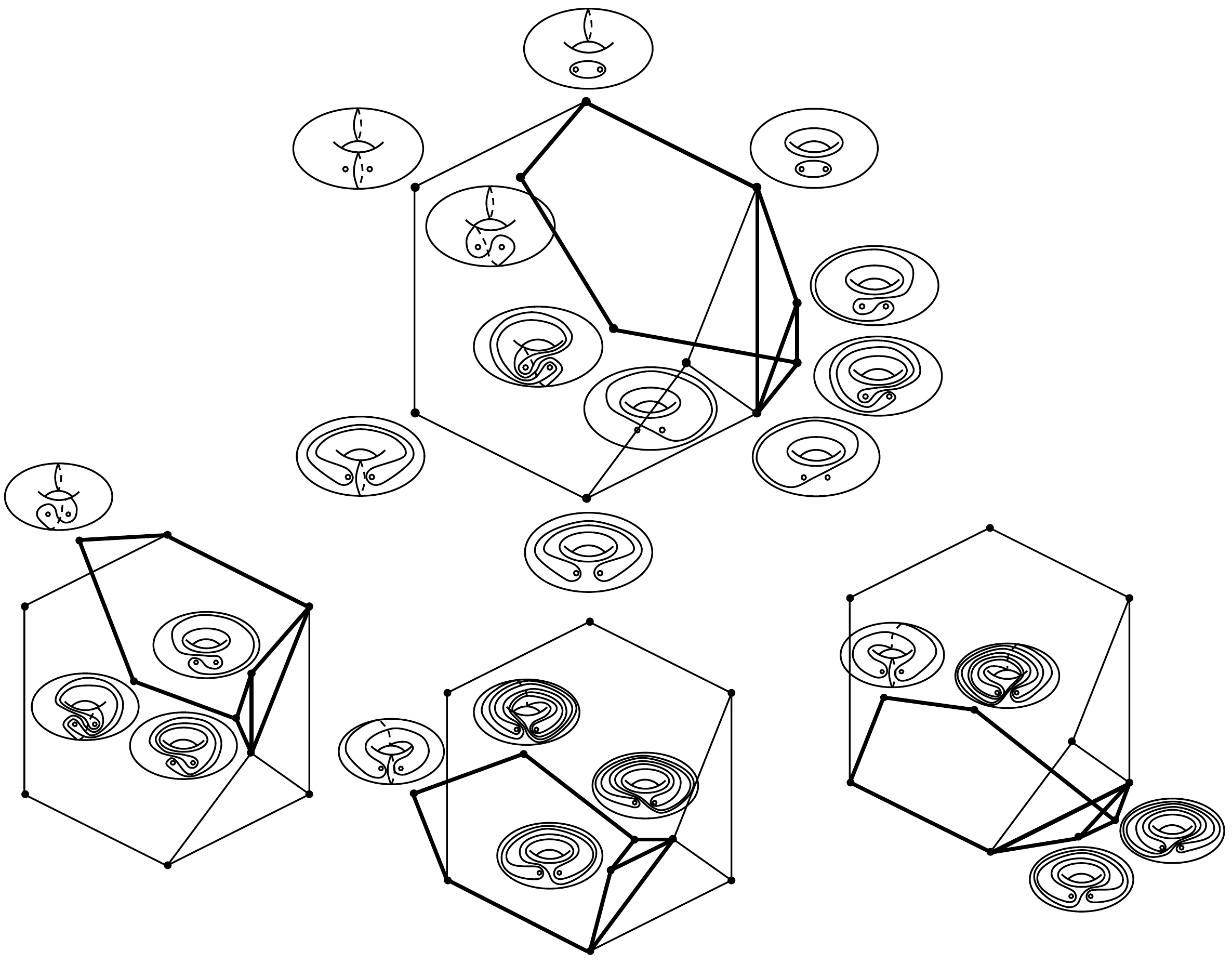}};
\end{tikzpicture}
\caption{{\bf Top:} The twist image of the thickened almost alternating hexagon $Z_{1,2}$ (shown in thick lines) under a half-twist around $\beta_1$. {\bf Bottom:} The other twist images of $Z_{1,2}$ under half-twists around $\beta_1$ and $\beta_5$.}
\label{F:hexagon1}
\end{center}
\begin{picture}(0,0)(0,0)
\put(298,347){$v_1$}
\put(235,385){$v_2$}
\put(150,347){$v_3$}
\put(150,270){$v_4$}
\put(235,233){$v_5$}
\put(298,268){$v_6$}
\put(250,330){$T^{\frac{1}{2}}_{\beta_1}$}
\put(90,220){$T^{-\frac{1}{2}}_{\beta_1}$}
\put(180,65){$T^{\frac{1}{2}}_{\beta_5}$}
\put(340,95){$T^{-\frac{1}{2}}_{\beta_5}$}
\end{picture}
\end{figure}

\begin{figure}[htb]
\begin{center}
\includegraphics[height=5cm]{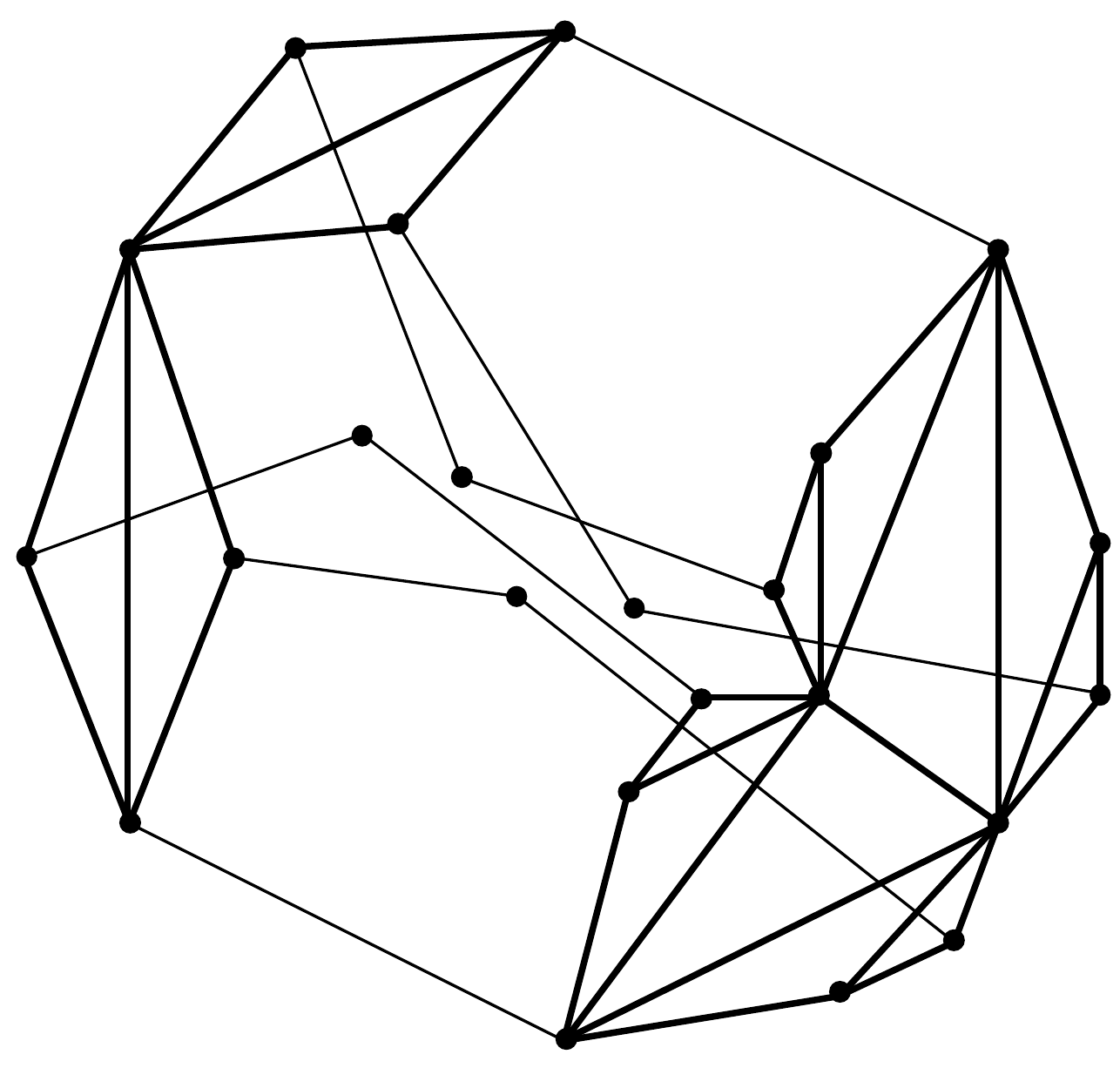} 
\caption{The finite rigid subgraph $X_{1,2}$. Two small Farey graphs (each contains two triangles) and one large Farey graph (contains ten triangles) are shown thickened.}
\label{F:X12}
\end{center}
\end{figure}

We begin by showing that any thickened almost alternating hexagon uniquely determines a subgraph isomorphic to $X_{1,2}$ containing it.
\begin{lemma}\label{L:uniqueness of X_{1,2}}
If $Z\subset\MP(S_{g,n})$ is a thickened almost alternating hexagon then there exists a unique subgraph $X$ containing $Z$, so that $(X,Z) \cong (X_{1,2},Z_{1,2})$.
\end{lemma}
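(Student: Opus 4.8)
The plan is to argue as in \cite[Lemma 3.3]{Rasimate1}: first produce such an $X$, then show it is forced.

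\emph{Existence.} Any two thickened almost alternating hexagons are isomorphic as abstract graphs, so there is a graph isomorphism $\psi \colon Z_{1,2} \to Z$. Viewing $\psi$ as an injective simplicial map $Z_{1,2} \to \MP(S_{g,n})$ whose image is a thickened almost alternating hexagon, Lemma~\ref{L:almost alternating hexagon} yields a deficiency-$2$ multicurve $Q$ and a homeomorphism $f \colon S_{1,2} \to (S_{g,n}-Q)_0$ with $f^Q|_{Z_{1,2}} = \psi$. Since $f$ is a $\pi_1$-injective embedding, $f^Q$ is injective on vertices, so $X := f^Q(X_{1,2})$ is a subgraph of $\MP(S_{g,n})$ containing $Z$ with $(X,Z) \cong (X_{1,2}, Z_{1,2})$.

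\emph{Uniqueness.} Suppose $X \supseteq Z$ with $(X,Z) \cong (X_{1,2}, Z_{1,2})$, and identify the vertices of $Z$ with $v_1,\dots,v_7$ via a fixed such isomorphism. First, one recovers the three Farey graphs of $\MP(S_{g,n})$ carrying the $14$ triangles of $X_{1,2}$: since each edge of $\MP(S_{g,n})$ lies in a unique Farey graph, the ``large'' one is the Farey graph containing the edge $v_6 v_7$ of $Z$ (it also contains $v_1$ and $v_5$), and the two ``small'' ones are the Farey graphs containing $v_2 v_3$ and $v_3 v_4$. Within each of these, the corresponding part of $X_{1,2}$ is a connected union of triangles of the Farey tessellation; since every edge of the Farey tessellation lies in exactly two triangles, this union is obtained from the part visible in $Z$ (a single edge, resp. two triangles) by repeatedly completing a boundary edge to its unique further triangle, hence is determined by $Z$. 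This reconstructs the ``thick hexagon'' $\widehat{Z} \subseteq X$ spanned by $Z$ and all triangles of $X$; in particular it reconstructs the four edges of $X_{1,2}$ joining $v_3$ to the two new vertices in each small Farey graph, as these are sides of the triangles above. What remains of $X_{1,2}$ beyond $\widehat{Z_{1,2}}$ then consists of four valence-two vertices — the images of $v_4$ under the two half-twists about $\beta_1$ and of $v_2$ under the two half-twists about $\beta_5$ — each being the midpoint of a length-two path whose endpoints already lie in $\widehat{Z_{1,2}}$, together with the two edges of each such path. In the copy $(X,Z)$ the endpoints of each such path are vertices of $\widehat{Z}$, they are at distance two in $\MP(S_{g,n})$ (the path gives distance $\le 2$, and they are non-adjacent since $X$ has no triangle there), and the path is neither contained in a single Farey graph nor part of an alternating square — the three vertices share only $Q$, and $(S_{g,n}-Q)_0$ is a single complexity-two surface rather than two complexity-one surfaces. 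By Lemma~\ref{L:length2} the geodesic is therefore unique, so each of these four midpoints, with its two incident edges, is forced. Hence the isomorphism $X_{1,2} \to X$ is pinned down on all of $X_{1,2}$, and $X$ is unique.

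I expect the crux to be the bookkeeping behind the last step: verifying by direct inspection of the four twist images that $X_{1,2}$ indeed decomposes as $\widehat{Z_{1,2}}$ together with exactly these four length-two appendages — in particular that the only vertices of $X_{1,2}$ outside the three distinguished Farey graphs are the four stated half-twist images — and checking that in each case the associated length-two path genuinely avoids the two exceptional configurations of Lemma~\ref{L:length2}.
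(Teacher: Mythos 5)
Your existence argument and your reconstruction of the $14$ triangles follow the paper's proof, and your verification that each length-two appendage falls into the ``unique geodesic'' case of Lemma~\ref{L:length2} is correct (the three vertices of such a path share only the deficiency-$2$ multicurve $Q$, whose complement is a single complexity-two piece). But there is a genuine gap in the last step. What you prove is that each length-two path of a given copy $X$ is the unique geodesic between \emph{its own} endpoints; what uniqueness of $X$ requires is that the four \emph{pairs} of endpoints are themselves forced. They are not forced by the graph structure alone: the isomorphism $(X,Z)\cong(X_{1,2},Z_{1,2})$ only determines the restriction to $\widehat{Z}$ up to symmetries of $\widehat{Z_{1,2}}$ fixing $Z_{1,2}$, and a priori a second copy $X'$ could attach a length-two path from (the vertex playing the role of) $T_{\beta_1}^{1/2}(v_3)$ to $T_{\beta_1}^{-1/2}(v_5)$ rather than to $T_{\beta_1}^{1/2}(v_5)$, yielding a subgraph still abstractly isomorphic to $(X_{1,2},Z_{1,2})$ but different from $X$. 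Saying ``the isomorphism is pinned down on all of $X_{1,2}$'' assumes exactly what needs to be shown.

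The paper closes this gap with an additional geometric argument: writing $x_3^{+}=Q\cup\{\alpha,\beta\}$ and $x_5^{-}=Q\cup\{\alpha',\beta'\}$ for the ``crossed'' pair, one checks from the twist images that each of $\alpha,\beta$ intersects each of $\alpha',\beta'$, so $x_3^{+}$ and $x_5^{-}$ are at distance greater than $2$ and no such path can exist (and similarly for $y_3^{+}$ and $y_1^{-}$). Only the correctly matched pairs admit length-two paths, and then your uniqueness-of-geodesics step applies. You should add this intersection check; without it the proof is incomplete.
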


\begin{proof}  By Lemma~\ref{L:almost alternating hexagon}, there is a deficiency-$2$ multicurve $Q \subset S_{g,n}$ and homeomorphism $f \colon S_{1,2} \to (S_{g,n} -Q)_0$, so that $X = f^Q(X_{1,2})$ is a subgraph of $\MP(S_{g,n})$ containing $Z$.  Then $f^Q|_{X_{1,2}}$ defines a homeomorphism of pairs $(X_{1,2},Z_{1,2})$ to $(X,Z)$.  Suppose that $X'$ is another subgraph containing $Z$ with $(X',Z) \cong (X_{1,2},Z_{1,2})$.

First, note that each edge of $\MP(S_{g,n})$ is contained in a unique Farey graph, and in particular is contained in a unique pair of triangles.  Therefore, the $14$ triangles in $X$ (and hence $X'$) are uniquely determined by $Z$.  It follows that $X \cap X'$ contains these $14$ triangles, and so to complete the proof, the remaining four length-$2$ geodesics of $X$ must be shown to be the same as those of $X'$.  

Now set
\[ x_i^{\pm} = f^Q(T_{\beta_1}^{\pm \frac12}(v_i)) \mbox{ and } y_j^{\pm} = f^Q(T_{\beta_5}^{\pm \frac 12}(v_j)) \]
for $i = 3,4,5$ and $j = 1,2,3$.  Then $\{x_3^+,x_4^+,x_5^+\}$, $\{x_3^-,x_4^-,x_5^-\}$,$\{y_3^+,y_2^+,y_1^+\}$ and $\{y_3^-,y_2^-,y_1^-\}$ are the vertices of the four length-$2$ geodesics in $X$, outside the $14$ triangles.
According to Lemma~\ref{L:length2}, these geodesics are unique.  Therefore, if the length-$2$ geodesics in $X'$ also connect $x_3^+$ to $x_5^+$, $x_3^-$ to $x_5^-$, $y_3^+$ to $y_1^+$, and $y_3^-$ to $y_1^-$,  then $X' = X$ as required.  

Since $X' \cong X$, it suffices to show that there are no length-$2$ paths connecting $x_3^+$ to $x_5^-$, nor any connecting $y_3^+$ to $y_1^-$.  In the first case, we write
\[ x_3^+ = Q \cup \{\alpha,\beta\} \mbox{ and } x_5^- = Q \cup \{ \alpha',\beta'\}.\]
Since $f^Q$ is induced by the homeomorphism $f$, inspection of Figure~\ref{F:hexagon1} shows that each of $\alpha,\beta$ intersects each of $\alpha',\beta'$, and so there is no length two path between $x_3^+$ and $x_5^-$.  A similar inspection shows that $y_3^+$ is not connected by $y_1^-$.  Thus, $X' = X$, and we are done.
\end{proof}

We prove the following theorem which is our main theorem for the case of twice-punctured torus.
\begin{theorem} [Main Theorem for $S_{1,2}$] \label{T:Main for S12}
For any surface $S_{g,n}$ and any injective simplicial map
\[
\phi:X_{1,2}\to\MP(S_{g,n}),
\]
there exists a deficiency-$2$ multicurve $Q$ and a homeomorphism $f:S_{1,2}\to (S_{g,n}-Q)_0$ so that $f^Q|_{X_{1,2}} = \phi$.  Moreover, $f$ is unique up to isotopy and pre-composing with the hyperelliptic involution.
\end{theorem}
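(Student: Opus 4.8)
The plan is to reduce the theorem to Lemma~\ref{L:uniqueness of X_{1,2}} and Lemmas~\ref{L:almost alternating hexagon}--\ref{L:f for TAAHs} by showing that any injective simplicial map $\phi \colon X_{1,2} \to \MP(S_{g,n})$ carries the core $Z_{1,2}$ to a thickened almost alternating hexagon. Once that is established, Lemma~\ref{L:almost alternating hexagon} produces the deficiency-$2$ multicurve $Q$ and a homeomorphism $f \colon S_{1,2} \to (S_{g,n}-Q)_0$ inducing $\phi|_{Z_{1,2}}$; Lemma~\ref{L:uniqueness of X_{1,2}} then shows $\phi(X_{1,2})$ must be exactly $f^Q(X_{1,2})$ (since $\phi(X_{1,2})$ is a subgraph isomorphic to $X_{1,2}$ containing the thickened hexagon $\phi(Z_{1,2})$), so that in fact $f^Q|_{X_{1,2}} = \phi$; and Lemma~\ref{L:f for TAAHs} gives the stated uniqueness up to isotopy and the hyperelliptic involution. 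So the entire content is the claim about $\phi(Z_{1,2})$, and that is the main obstacle.

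To prove $\phi(Z_{1,2})$ is a thickened almost alternating hexagon, I would argue as follows. The $14$ triangles of $X_{1,2}$ map to $14$ triangles in $\MP(S_{g,n})$ distributed among Farey graphs exactly as in $X_{1,2}$: two ``small'' Farey graphs each containing two triangles (the images of $B_2$ and of the $B$-subgraph spanned by $v_2,v_3$ and by $v_3,v_4$), and one ``large'' Farey graph containing ten triangles. First I would pin down that $\phi(v_2 v_3 v_4)$ is an alternating $5$-tuple: the small Farey graphs force consecutive edges of the pentagon $(\phi(v_1),\dots,\phi(v_5))$ not to all lie in one Farey graph, and by Lemma~\ref{L:no alternating 2-tuple} there is no alternating $2$- or $3$-tuple, so the pentagon lies in at least four Farey graphs; ruling out exactly four proceeds as in the $S_{0,5}$ lemma above using Lemma~\ref{L:square} (an alternating $4$-tuple among five pentagon vertices would force an edge between two non-adjacent pentagon vertices). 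Hence $(\phi(v_1),\dots,\phi(v_5))$ is an alternating $5$-tuple with $\phi(v_i)$ adjacent to $\phi(v_{i+1})$ for $i = 1,\dots,4$. It remains to check that $\phi(v_1)$ and $\phi(v_5)$ are outer points of a quadrilateral triple and that $\phi(v_6),\phi(v_7)$ are its two central points; this should follow because $\phi(v_6)$ and $\phi(v_7)$ lie in the large Farey graph together with $\phi(v_1)$ and $\phi(v_5)$, are each adjacent to both, and are adjacent to each other, with the ten triangles of the large Farey graph present in the image forcing $\phi(v_1),\phi(v_5)$ to be distance $2$ in that Farey graph and hence outer points of exactly two quadrilateral triples realized by $\phi(v_6),\phi(v_7)$.

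The subtle point — and the step I expect to be the real obstacle — is showing the images of the small Farey graphs are genuinely distinct from the large one and from each other, i.e. that $\phi$ cannot ``collapse'' the three Farey-graph structure of $X_{1,2}$ or rearrange which triangles are attached to which edges. Here I would use the extra edges: the four length-$2$ geodesics from $v_3$ to its twist images $T_{\beta_1}^{\pm\frac12}(v_3)$ and $T_{\beta_5}^{\pm\frac12}(v_3)$ are attached to $Z_{1,2}$ in a way that is incompatible with $\phi(Z_{1,2})$ lying in fewer Farey graphs, exactly as the ten triangles rule out $Z_{0,5}$ lying in one Farey graph in the $S_{0,5}$ case. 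Concretely, the presence of these four paths together with the $14$ triangles means $\phi(Z_{1,2})$ has a vertex ($\phi(v_3)$) incident to two distinct small Farey graphs and to edges of the large one, and a straightforward case analysis on how a pentagon's edges can be partitioned into Farey graphs (using Lemma~\ref{L:circuits to alternating tuples} and Lemma~\ref{L:no alternating 2-tuple}) eliminates every configuration except the almost alternating hexagon. I would organize this as: (i) images of triangles and their Farey graphs; (ii) the pentagon is alternating; (iii) identify the central points and conclude; (iv) invoke the three lemmas to finish.

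\begin{proof}
By Lemma~\ref{L:uniqueness of X_{1,2}}, Lemma~\ref{L:almost alternating hexagon}, and Lemma~\ref{L:f for TAAHs}, it suffices to prove that $\phi(Z_{1,2})$ is a thickened almost alternating hexagon; the rest of the conclusion then follows by applying those three lemmas to $\phi(Z_{1,2})$ and the subgraph $\phi(X_{1,2}) \supset \phi(Z_{1,2})$.

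Each edge of $\MP(S_{g,n})$ lies in a unique Farey graph, which contains exactly two triangles through that edge. Hence $\phi$ carries the $14$ triangles of $X_{1,2}$ to $14$ triangles, and the Farey-graph incidences among the edges of $Z_{1,2}$ are preserved: the images of the two small Farey graphs $\MP_{\beta_1}(S_{1,2}) \cap Z_{1,2}$-type subgraphs spanned by $v_2,v_3$ and by $v_3,v_4$ lie in Farey graphs each carrying two of the triangles, and the images of $v_1,v_5,v_6,v_7$ lie in a Farey graph carrying the remaining ten. Writing $(\phi(v_1),\dots,\phi(v_5))$ for the image pentagon, the two small Farey graphs show that consecutive edges of this pentagon do not all lie in a single Farey graph. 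By Lemma~\ref{L:no alternating 2-tuple} there is no alternating $2$-tuple or $3$-tuple, so the pentagon cannot lie in two or three Farey graphs. If it lay in four, then four of its vertices would form an alternating $4$-tuple; but among five cyclically ordered vertices any alternating $4$-tuple forces, via Lemma~\ref{L:square}, an edge between a pair of pentagon vertices that are not consecutive, contradicting that the pentagon is an embedded circuit with only the edges $\phi(v_i)\phi(v_{i+1})$. Hence $(\phi(v_1),\dots,\phi(v_5))$ lies in five Farey graphs and, by construction of the triangles and Lemma~\ref{L:circuits to alternating tuples}, is an alternating $5$-tuple with $\phi(v_i)$ adjacent to $\phi(v_{i+1})$ for $i = 1,\dots,4$.

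It remains to identify $\phi(v_6)$ and $\phi(v_7)$. Both lie in the large Farey graph $F$ together with $\phi(v_1)$ and $\phi(v_5)$; each is adjacent to both $\phi(v_1)$ and $\phi(v_5)$, and $\phi(v_6)$ is adjacent to $\phi(v_7)$. Since the ten triangles of the large Farey graph appear in $\phi(Z_{1,2})$ arranged as in $Z_{1,2}$, the vertices $\phi(v_1)$ and $\phi(v_5)$ are at distance $2$ in $F$ and are not in a common triangle; thus $\phi(v_1)$ and $\phi(v_5)$ are outer points of exactly two quadrilateral triples of $F$, whose central points are adjacent, and $\phi(v_6),\phi(v_7)$ are precisely these two central points. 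Therefore $(\phi(v_1),\dots,\phi(v_5))$ is an almost alternating hexagon and $\phi(Z_{1,2})$ is its thickening, completing the proof.
\end{proof}
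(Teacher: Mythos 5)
Your overall architecture matches the paper's: reduce everything to showing that $\phi(Z_{1,2})$ is a thickened almost alternating hexagon, then invoke Lemmas~\ref{L:almost alternating hexagon}, \ref{L:f for TAAHs}, and \ref{L:uniqueness of X_{1,2}}, and the cases of two, three, and four Farey graphs are treated as in the paper. However, there is a genuine gap at the first step: ruling out that $\phi(Z_{1,2})$ lies in a \emph{single} Farey graph. You dispose of this by asserting that ``the Farey-graph incidences among the edges of $Z_{1,2}$ are preserved,'' justified only by the fact that each edge of $\MP(S_{g,n})$ lies in a unique Farey graph with exactly two triangles through it. That fact yields only one direction: edges of $X_{1,2}$ joined through a chain of shared triangles have images in the same Farey graph. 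It does \emph{not} yield that edges lying in distinct Farey graphs of $\MP(S_{1,2})$ have images in distinct Farey graphs --- which is essentially the statement you are trying to prove, so the argument is circular as written. The local data do not suffice either: if everything landed in one Farey graph $F$, the link of $\phi(v_3)$ in $F$ is a line, and the four triangles at $\phi(v_3)$ only force two disjoint length-two segments in that line, which is perfectly consistent. The paper closes this case with a global separation argument you do not supply: $\phi(v_1),\phi(v_6),\phi(v_7),\phi(v_5)$ span a quadrilateral in a Farey graph, any path in a Farey graph between the outer points of a quadrilateral must pass through one of its central points, and the embedded path $\phi(v_1)\phi(v_2)\phi(v_3)\phi(v_4)\phi(v_5)$ avoids $\phi(v_6)$ and $\phi(v_7)$ by injectivity --- a contradiction. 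You need this (or an equivalent) observation.

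A secondary, fixable issue: in the four-Farey-graph case you say Lemma~\ref{L:square} ``forces an edge between a pair of pentagon vertices that are not consecutive, contradicting that the pentagon is an embedded circuit with only the edges $\phi(v_i)\phi(v_{i+1})$.'' An injective simplicial map need not reflect non-adjacency, so an extra edge among image vertices is not by itself a contradiction. The contradiction must come from the triangles: for instance, if $\phi(v_1)\sim\phi(v_3)$, then $\phi(v_1)$ would be the third vertex of a triangle on the edge $[\phi(v_2),\phi(v_3)]$, but both triangles on that edge are already occupied by $\phi(T_{\beta_1}^{\pm\frac12}(v_3))$, contradicting injectivity (and in the middle configuration the lemma forces a second length-two path rather than an edge, ruled out the same way). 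Finally, $\phi(X_{1,2}) = f^Q(X_{1,2})$ does not immediately give $\phi = f^Q|_{X_{1,2}}$; one must note, as the paper does, that the two maps differ by a graph automorphism of $X_{1,2}$ fixing $Z_{1,2}$ pointwise, which is necessarily the identity.
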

\begin{remark} Although there is the ambiguity of precomposing with the hyperelliptic involution of $S_{1,2}$, $f$ is sufficiently determined that for any vertex $v$ of $X_{1,2}$ and either curve $\gamma$ in the pants decomposition of $S_{1,2}$ determined by $v$, $f(\gamma)$ is a uniquely determined curve in the pants decomposition $\phi(v)$ of $S_{g,n}$.
\end{remark}
\begin{proof}
We begin the proof by showing that $\phi(Z_{1,2})$ is a thickened almost alternating hexagon. We first claim that $\phi$ cannot map $Z_{1,2}$ into a single Farey graph.  To see this, observe that the quadrilateral spanned by the two quadrilateral triples has the property that any path in the Farey graph connecting the two outer points necessarily passes through at least one of the central points.  Therefore, the $\phi$-image of the path of length four in $Z_{1,2}$ outside the quadrilateral contradicts injectivity if $\phi(Z_{1,2})$ lies in a single Farey graph, proving the claim. 

Moreover, $\phi(Z_{1,2})$ cannot be contained in only two or three Farey graphs since this would produce an alternating $2$--tuple or $3$--tuple by Lemma~\ref{L:circuits to alternating tuples}, which is impossible according to Lemma~\ref{L:no alternating 2-tuple}. 

Therefore, we suppose $\phi(Z_{1,2})$ is contained in four different Farey graphs. Then by Lemma~\ref{L:circuits to alternating tuples} four vertices from $\phi(v_1),...,\phi(v_5)$ form an alternating $4$-tuple, and moreover, the $14$ triangles of $X_{1,2}$ must also map into these four Farey graphs. Up to symmetry, there are three possible cases, which are:
\[ (\phi(v_1),\phi(v_3), \phi(v_4),\phi(v_5)) , \quad\quad
(\phi(v_1),\phi(v_2),\phi(v_4),\phi(v_5)), \quad \mbox{ or } \quad \quad (\phi(v_1),\phi(v_2),\phi(v_3),\phi(v_4)).\]
See Figure~\ref{F:impossible square}.  By inspection, Lemma~\ref{L:square} shows that all these configurations lead to a contradiction.  For example, in the first case there is no edge connecting $\phi(v_1)$ and $\phi(v_3)$ in the Farey graph containing them while there is an edge connecting $\phi(v_4)$ and $\phi(v_5)$, contradicting to Lemma~\ref{L:square}.  Therefore, $\phi(Z_{1,2})$ cannot be in four different Farey graphs and we conclude that $\phi(Z_{1,2})$ is a thickened almost alternating hexagon. 

By Lemma~\ref{L:almost alternating hexagon} there is a  deficiency-$2$ multicurve $Q \subset S_{g,n}$ and a homeomorphism $f \colon S_{1,2} \to (S_{g,n}-Q)_0$ with $f^Q|_{Z_{1,2}} = \phi|_{Z_{1,2}}$, unique up to precomposing with the hyperelliptic involution.  We further observe that $f^Q|_{X_{1,2}}$ induces a graph-isomorphism of the pair $(X_{1,2},Z_{1,2})$ to $(f^Q(X_{1,2}),f^Q(Z_{1,2})) = (f^Q(X_{1,2}),\phi(Z_{1,2}))$.  By Lemma~\ref{L:uniqueness of X_{1,2}}, $f^Q(X_{1,2}) = \phi(X_{1,2})$.  Therefore, $f^Q|_{X_{1,2}}$ and $\phi$ differ by a symmetry of the graph $X_{1,2}$ that is the identity on $Z_{1,2}$.  It is easy to see that such a graph symmetry must be the identity on the vertices, and hence $\phi = f^Q|_{X_{1,2}}$.  This completes the proof.
\end{proof} 
\begin{figure}[ht]
\begin{center}
\includegraphics[height=3.5cm]{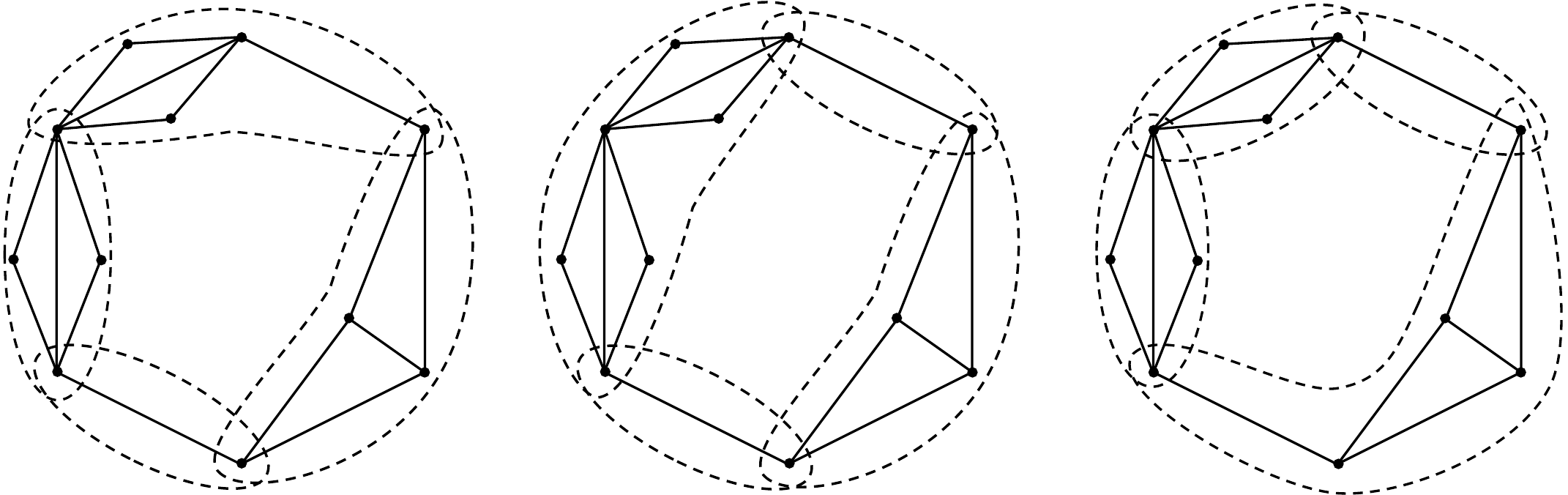} 
\caption{Three images of impossible alternating 4-tuples. Each dashed circle represent a Farey graph.}
\label{F:impossible square}
\end{center}
\begin{picture}(0,0)(0,0)
\put(165,125){\small$\phi(v_1)$}
\put(120,145){\small$\phi(v_2)$}
\put(65,125){\small$\phi(v_3)$}
\put(60,65){\small$\phi(v_4)$}
\put(120,35){\small$\phi(v_5)$}
\end{picture}
\end{figure}

\section{Genus-two surface $S_{2,0}$}
\label{sec:S20}
We next consider the case of a closed genus $2$ surface, $S_{2,0}$.  This follows a different pattern than the other surfaces, so we handle this case separately.  First, consider the three  non-separating, pairwise-disjoint curves $\gamma_1,\gamma_2,\gamma_3$ on $S_{2,0}$  shown on the left in Figure~\ref{F:X20}.  The complement of each $\gamma_i$ is a surface homeomorphic to $S_{1,2}$.  Inside $\MP(S_{2,0})$, the subgraphs $\MP(S_{2,0})_{\gamma_i}$, for $i=1,2,3$, are three copies of $\MP(S_{1,2})$.  Inside each of these, we choose a copy of $Z_{1,2}$ as in Figure~\ref{F:X20} on the right, and then enlarge it to a copy of $X_{1,2}$ as in the previous section.  We denote these latter subgraphs by $X_i \subset \MP(S_{2,0})_{\gamma_i}$, for $i=1,2,3$.  Now set
\[
X_{2,0}=X_1\cup X_2\cup X_3.
\]
\begin{figure}[ht]
\begin{center}
\includegraphics[height=8cm]{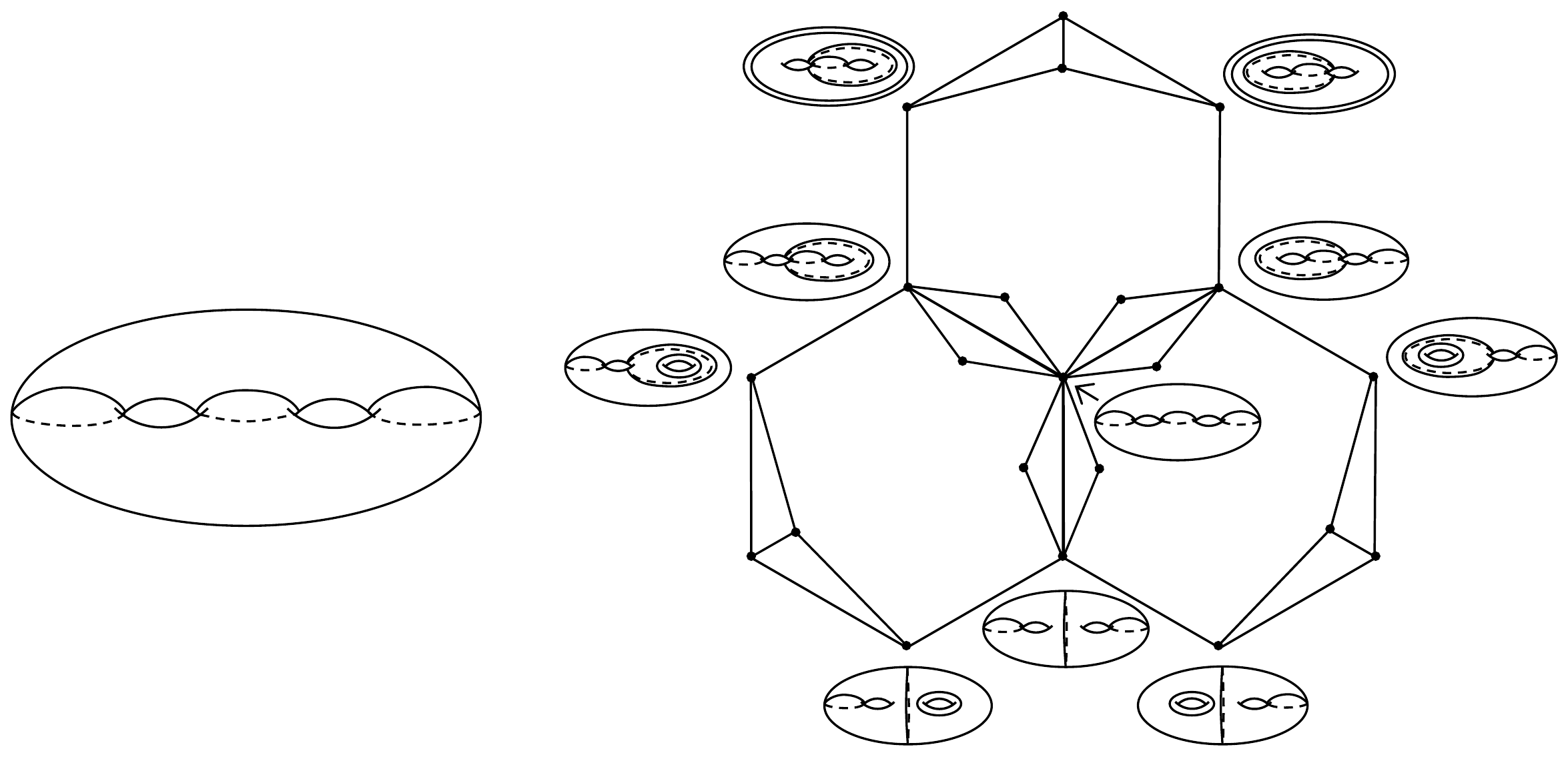}
\caption{{\bf Left:} The three curves $\gamma_1,\gamma_2,\gamma_3$ on $S_{2,0}$.  {\bf Right:} The finite subgraph $X_{2,0}$ is the union of three copies of $X_{1,2}$ as indicated on the right of the figure (only the defining $Z_{1,2}$ subgraph and the part of the Farey graph of each copy of $X_{1,2}$ is shown to avoid complicating the picture). Each copy comes from a subsurface homeomorphic to $S_{1,2}$ obtained by cutting $S_{2,0}$ along one of the curves $\gamma_1$, $\gamma_2$, and $\gamma_3$.}
\label{F:X20}
\end{center}
\begin{picture}(0,0)(0,0)
\put(20,175){$\small\gamma_1$}
\put(70,175){$\small\gamma_2$}
\put(120,175){$\small\gamma_3$}
\put(370,150){$X_3$}
\put(250,150){$X_1$}
\put(310,270){$X_2$}
\put(290,225){$Y^{1,2}$}
\put(290,170){$Y^{1,3}$}
\put(330,225){$Y^{2,3}$}
\end{picture}
\end{figure}
\begin{theorem} [Main Theorem for $S_{2,0}$] \label{T:Main for S20}
Suppose $S_{g,n}$ is any surface, and
\[
\phi:X_{2,0}\to\MP(S_{g,n}),
\]
is a simplicial embedding.  Then $(g,n) = (2,0)$ and there is a homeomorphism $f \colon S_{2,0} \to S_{2,0}$ that induces $\phi$.  The homeomorphism $f$ is unique up to isotopy and composing with the hyperelliptic involution.
\end{theorem}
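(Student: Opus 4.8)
The plan is to apply the rigidity of $X_{1,2}$ (Theorem~\ref{T:Main for S12}) to each of the three pieces $X_1,X_2,X_3$ and then to glue the three resulting embeddings of $S_{1,2}$ into $S_{g,n}$, using the combinatorics of the overlaps, to reconstruct $S_{2,0}$. First I would fix, for $i=1,2,3$, a homeomorphism $h_i\colon S_{1,2}\to S_{2,0}-\gamma_i$ realizing the identification $X_i=h_i^{\gamma_i}(X_{1,2})$, and apply Theorem~\ref{T:Main for S12} to the injective simplicial map $\phi\circ h_i^{\gamma_i}\colon X_{1,2}\to\MP(S_{g,n})$. This yields a deficiency-$2$ multicurve $Q_i\subset S_{g,n}$ and a homeomorphism $g_i:=f_i\circ h_i^{-1}\colon S_{2,0}-\gamma_i\to(S_{g,n}-Q_i)_0$, unique up to isotopy and precomposition with the hyperelliptic involution, such that $\phi(w)=g_i(w\setminus\gamma_i)\cup Q_i$ for every vertex $w$ of $X_i$; by the Remark following Theorem~\ref{T:Main for S12}, the individual curves $g_i(c)$ are well defined for curves $c$ appearing in vertices of $X_i$.

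Next comes the combinatorial heart of the argument, the analysis of the overlaps. By construction (see Figure~\ref{F:X20}) each $X_i\cap X_j$ is a nonempty subgraph containing edges, whose vertices are pants decompositions of the form $\{\gamma_i,\gamma_j,\delta\}$; for such a $w$ we have $\phi(w)=g_i(\{\gamma_j,\delta\})\cup Q_i=g_j(\{\gamma_i,\delta\})\cup Q_j$. An edge of $X_i\cap X_j$ is an elementary move $\{\gamma_i,\gamma_j,\delta\}\leftrightarrow\{\gamma_i,\gamma_j,\delta'\}$ supported in the nontrivial component $S_{2,0}-(\gamma_i\cup\gamma_j)\cong S_{0,4}$, and its image in $\MP(S_{g,n})$ is again an elementary move, which is uniquely determined by its two endpoints; comparing this image from the $i$-side and the $j$-side then forces $Q_i\cup\{g_i(\gamma_j)\}=Q_j\cup\{g_j(\gamma_i)\}$ (in particular $Q_i\cup Q_j$ is a multicurve) and $g_i(\delta)=g_j(\delta)$ for every curve $\delta$ occurring in a vertex of $X_i\cap X_j$. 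Since the $\gamma_i$ are pairwise disjoint, and since the three overlaps are designed to carry enough curves to determine a homeomorphism of $S_{0,4}$ up to isotopy --- one overlap alone leaves a twisting ambiguity that the other two remove --- it follows that $g_1,g_2,g_3$ agree up to isotopy on their pairwise, hence triple, overlaps, and that the isotopy classes of curves determined by $\phi$ on $X_{2,0}$ fill $S_{2,0}$.

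Finally I would glue and conclude. Because $\gamma_1\cap\gamma_2\cap\gamma_3=\emptyset$, the open subsurfaces $S_{2,0}-\gamma_i$ cover $S_{2,0}$, so after replacing the $f_i$ by suitable representatives of their isotopy classes and fixing one hyperelliptic choice, the $g_i$ agree on overlaps on the nose and patch to a continuous map $g\colon S_{2,0}\to S_{g,n}$ which is locally a homeomorphism. Then $g(S_{2,0})$ is compact, hence closed, and open in $S_{g,n}$, so (as $S_{g,n}$ is connected) $g$ is a covering map onto $S_{g,n}$; since $X_{2,0}$ is not contained in a single Farey graph (it contains an almost alternating hexagon) we get $\kappa(S_{g,n})\ge 2$, which forces $\chi(S_{g,n})\le -2=\chi(S_{2,0})$ and hence degree $1$. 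Therefore $S_{g,n}\cong S_{2,0}$ and $g$ is a homeomorphism; moreover $g(\gamma_i)=Q_i$, the curve along which $(S_{2,0}-Q_i)_0$ is reglued, so $\phi(w)=g(w)$ for all $w\in X_{2,0}$, i.e.\ $g$ induces $\phi$. For uniqueness, any homeomorphism $f'\colon S_{2,0}\to S_{2,0}$ inducing $\phi$ must agree with $g$ on every curve appearing in a vertex of $X_{2,0}$, and these fill $S_{2,0}$; since the hyperelliptic involution is the only nontrivial mapping class acting trivially on isotopy classes of curves of $S_{2,0}$, the class of $f'$ equals that of $g$ up to composition with the hyperelliptic involution.

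I expect the main obstacle to be the overlap analysis of the second paragraph together with making the patching of the third rigorous. A single overlap $X_i\cap X_j$ only pins down the relative position of the subsurfaces $(S_{g,n}-Q_i)_0$ and $(S_{g,n}-Q_j)_0$ up to a Dehn twist, so the argument must use the precise combinatorial structure of all three $Y^{i,j}$ simultaneously to remove this ambiguity, and one must then verify that the glued map is genuinely well defined --- equivalently, that each $Q_i$ is a single curve, which is precisely what prevents the target from being a proper enlargement of $S_{2,0}$.
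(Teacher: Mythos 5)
Your overall strategy coincides with the paper's for the first two stages: apply Theorem~\ref{T:Main for S12} to each $X_i$ to get $Q_i$ and $f_i$, then use the fact that the Farey graph containing $\phi(Y^{i,j})$ is determined both by $Q_i\cup\{f_i(\gamma_j)\}$ and by $Q_j\cup\{f_j(\gamma_i)\}$ to match up curves across the overlaps; your derivation of $Q_i\cup\{g_i(\gamma_j)\}=Q_j\cup\{g_j(\gamma_i)\}$ and $g_i(\delta)=g_j(\delta)$ is exactly the paper's equation~(\ref{E:S20betas}). Where you genuinely diverge is in how the target is identified: the paper first shows $S_{g,n}=S_{2,0}$ and $Q=\emptyset$ by a direct topological argument (the three curves $\beta_1,\beta_2,\beta_3$ must bound the \emph{same} two pairs of pants $f_i(P_\pm)$ on both sides, so their union is a closed genus-two subsurface) and only then glues; you glue first into the unknown $S_{g,n}$ and then invoke the open-plus-closed image, proper local homeomorphism, and Euler characteristic to force degree one and $(g,n)=(2,0)$. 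Your route is valid and arguably more automatic, but it front-loads all the difficulty into the gluing step, which is precisely where the paper spends most of its effort and where your sketch is weakest. Two specific points to tighten: (i) the residual ambiguity on an overlap $S_{2,0}-(\gamma_i\cup\gamma_j)\cong S_{0,4}$, after matching a filling pair of curves from a triangle of $Y^{i,j}$ and the boundary components, is not a ``twisting ambiguity'' but the finite group (isomorphic to a subgroup of $\mathbb{Z}/2\times\mathbb{Z}/2$) of hyperelliptic involutions acting trivially on isotopy classes of curves; and (ii) resolving these ambiguities \emph{consistently} across all three overlaps is a genuine cocycle-type issue on the triple overlap $P_+\sqcup P_-$, which the paper handles by first normalizing so that $f_1(P_+)=f_2(P_+)=f_3(P_+)$ using the hyperelliptic involution of $S_{2,0}-\gamma_i$ and then checking that all three maps preserve or all reverse orientation on $P_\pm$. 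With those details supplied, your argument goes through.
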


\begin{proof}  According to Theorem~\ref{T:Main for S12}, there are three deficiency-$2$ multicurves $Q_1,Q_2,Q_3 \subset S$ and homeomorphisms $f_i \colon (S_{2,0} - \gamma_i) \to (S_{g,n}-Q_i)_0$, for $i=1,2,3$, so that $\phi|_{X_i} = f_i^{Q_i}|_{X_i}$.  

For each $i \neq j$, let $Y^{i,j} = X_i \cap X_j$ denote the intersection, which is contained in the Farey graph determined by the multicurve $\gamma_i \cup \gamma_j$.  By definition, $Y^{i,j} = Y^{j,i}$.
For $i,j,k$ all distinct, we note that $f^{Q_i}_i$ sends each of the Farey graphs containing $Y^{i,j}$ and $Y^{i,k}$ to distinct Farey graphs.  Therefore, the three Farey graphs containing $\phi(Y^{1,2}),\phi(Y^{1,3}),$ and $\phi(Y^{2,3})$ in $\MP(S_{g,n})$ are all distinct, and hence there is a deficiency-$3$ multicurve $Q$ so that for all $i \neq j$, we have
\[ Q = Q_i \cap Q_j = Q_1 \cap Q_2 \cap Q_3. \]
Therefore, for each $i=1,2,3$, there is some $\beta_i$ so that $Q_i = Q \cup \{\beta_i\}$, and $\beta_i \neq \beta_j$, for $i \neq j$ (c.f. Aramayona's argument in~\cite[Theorem C]{Aramayona}).

On the other hand, $\phi(Y^{1,2}) = f^{Q_1}_1(Y^{1,2}) = f^{Q_2}_2(Y^{1,2})$ is contained in the Farey graph that is determined by both $Q_1 \cup f^{Q_1}_1(\gamma_2)$ as well as $Q_2 \cup f^{Q_2}_2(\gamma_1)$.  Combining this with the previous paragraph, we see that $f_1(\gamma_2) = \beta_2$ and $f_2(\gamma_1) = \beta_1$.  Considering all permutations of indices and arguing similarly, we have
\begin{equation} \label{E:S20betas}
f_1(\gamma_2) = \beta_2 = f_3(\gamma_2), \, \, f_1(\gamma_3) = \beta_3 = f_2(\gamma_3), \mbox{ and } f_2(\gamma_1) = \beta_1 = f_3(\gamma_1).
\end{equation}

Now consider the two pairs of pants $P_+ \cup P_- = S_{2,0} - (\gamma_1 \cup \gamma_2 \cup \gamma_3)$.  Equation~(\ref{E:S20betas}) implies that $\beta_2$ and $\beta_3$ are boundary components of {\em both} pairs of pants $f_1(P_\pm)$ in $S_{g,n} - (Q \cup \beta_1 \cup \beta_2 \cup \beta_3)$.
Similarly, the pants $f_2(P_\pm) \subset S_{g,n} - (Q \cup \beta_1 \cup \beta_2 \cup \beta_3)$ have $\beta_1$ and $\beta_3$ as boundary components.  These must be the same pants as $f_1(P_\pm)$, and hence $\beta_1,\beta_2$, and $\beta_3$ are all boundary components of the same two pairs of pants in $S_{g,n} - (Q \cup \beta_1 \cup \beta_2 \cup \beta_3)$.  But that means that the union of these two pairs of pants is a genus two surface in $S_{g,n}$.  This is only possible if $S_{g,n} = S_{2,0}$, and $Q = \emptyset$.

Therefore, $Q_i = \{\beta_i\}$, for $i = 1,2,3$, and $f_i \colon S_{2,0} - \gamma_i \to S_{2,0} - \beta_i$ is a homeomorphism.  The hyperelliptic involution of $S_{2,0}$ interchanges the two pairs of pants $P_\pm$, and leaves invariant every curve.  Therefore we may precompose some of the maps $f_i$ with the hyperelliptic involution so that
\[ f_1(P_+) = f_2(P_+) = f_3(P_+)  \mbox{ and }  f_1(P_-) = f_2(P_-) = f_3(P_-). \]
Moreover, appealing to (\ref{E:S20betas}), we see that on $P_+$ and $P_-$, the maps $f_1$, $f_2$, and $f_3$ agree up to isotopy, and possibly an orientation reversing involution preserving each boundary component.  On the other hand, $f_1^{Q_1}$ and $f_2^{Q_2}$ agree on $Y^{1,2}$, which contains a triangle of the Farey graph defined by $\gamma_1 \cup \gamma_2$.  Consequently, $f_1$ and $f_2$ agree on every simple closed curve on $S_{2,0} - (\gamma_1 \cup \gamma_2)$.    The same is true for $f_1$ and $f_3$ on $S_{2,0} - (\gamma_1 \cup \gamma_3)$ and $f_2$ and $f_3$ on $S_{2,0} - (\gamma_2 \cup \gamma_3)$.  From this we deduce that all maps must either preserve or all maps must reverse the orientation on $P_+$ and $P_-$, and hence all three maps agree up to isotopy on $P_+$ and $P_-$.  This also implies that on each $i \neq j$, $f_i$ and $f_j$ restricted to $S_{2,0} - (\gamma_i \cup \gamma_j)$ agree up to (isotopy and) precomposing with one of the finite number of hyperelliptic mapping classes of the $S_{2,0} - (\gamma_i \cup \gamma_j)$ that act as the identity on the set of isotopy classes of essential simple closed curves.  Since $f_i$ and $f_j$ agree on the boundary components of this, in fact they must agree up to isotopy.  It follows that $f_1$, $f_2$, and $f_3$ {\em glue together} to well-define a mapping class $f \colon S_{2,0} \to S_{2,0}$ that agrees with $f_i$ on $S_{2,0} - \gamma_i$, for each $i = 1,2,3$.  

Now, for any $v \in X_i$, write $v = \gamma_i \cup \alpha \cup \beta$, and recall that $Q_i = \{\beta_i\}$ (since $(g,n) = (2,0)$).  The map $f_i^{Q_i}$ is defined by
\[ \phi(v) = f_i^{Q_i}(v) = \beta_i \cup f_i(\alpha) \cup f_i(\beta).\]
Now note that $\beta_i = f_j(\gamma_i)$, for $j \neq i$, and so this becomes
\[ \phi(v) = f_j(\gamma_i) \cup f_i(\alpha) \cup f_i(\beta) = f(\gamma_i) \cup f(\alpha) \cup f(\beta) = f(\gamma_i \cup \alpha \cup \beta) = f(v).\]
Since $X_{2,0}$ is the union of the subgraphs $X_i$, for $i =1,2,3$, it follows that $f$ induces $\phi$.  The uniqueness of each $f_i$ up to the hyperelliptic involution of $S_{2,0} - \gamma_i$ implies that $f$ is unique up to the hyperelliptic involution of $S_{2,0}$. \end{proof}
\section{General surface}
\label{sec:Sgn}

Let $S = S_{g,n}$ with $g\geq 1$, $\kappa(S) \geq 3$ and $S \neq S_{2,0}$.  In this section, we construct $X_{g,n}$ and prove it is rigid.  First, we recall that a \textbf{cut system} of $S$ is a maximal multicurve $C$ such that $S - C$ is connected; this implies that $C$ is a cut system if and only if $S - C$ is connected and $|C| = g$. By the classification of surfaces, all cut system of $S$ are the one that appears as $\gamma_1,\ldots,\gamma_g$ on the left in Figure~\ref{F:cut system}, up to homeomorphism.

\begin{figure}[ht]
\begin{center}
\includegraphics[height=6cm]{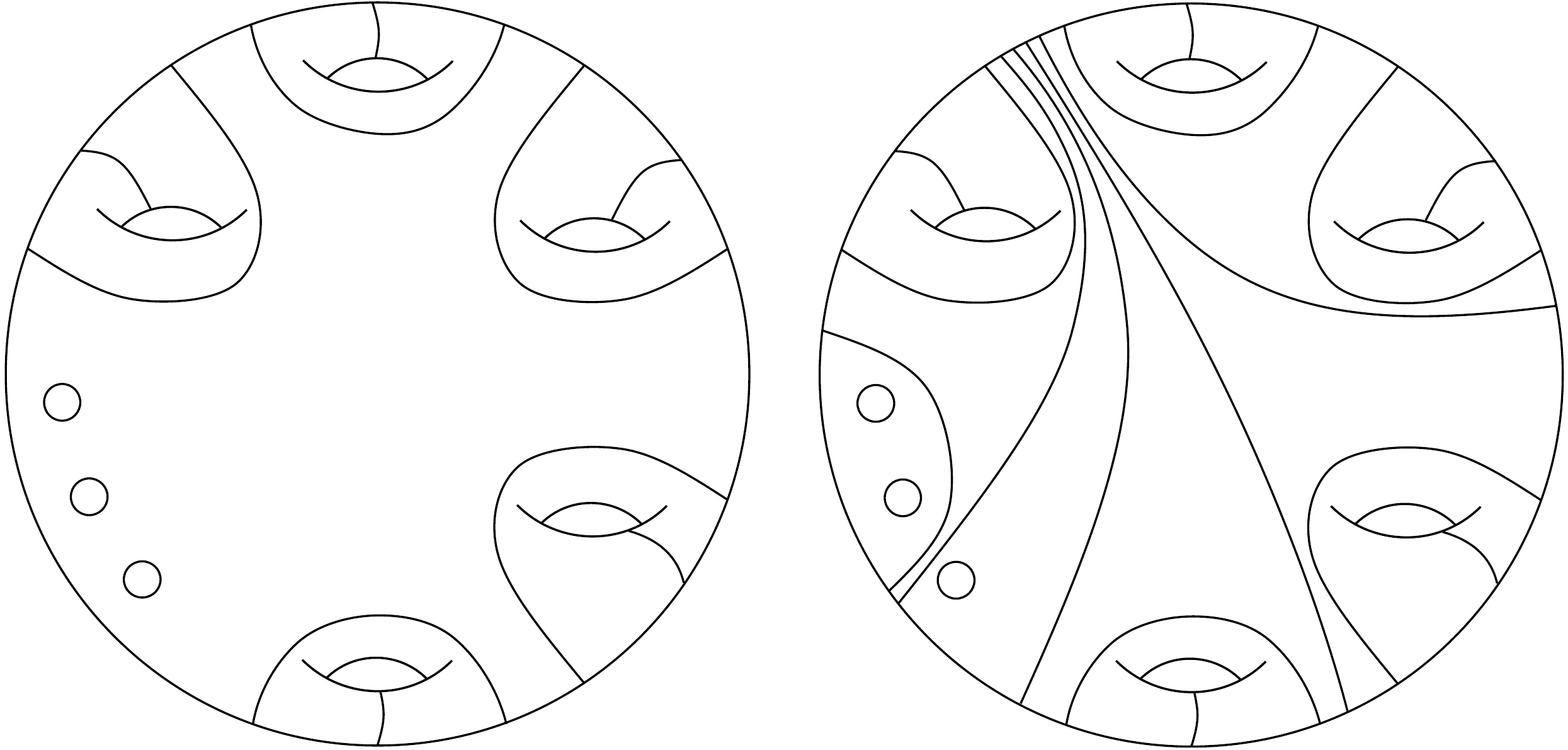} 
\caption{{\bf Left:} $S_{5,3}$ with a cut system $C=\{\gamma_1,...,\gamma_5\}$ and a multicurve $B=\{\beta_1,...,\beta_5\}$. {\bf Right:} $S_{5,3}$ with a pants decomposition $P=C\cup B\cup A$.}
\label{F:cut system}
\end{center}
\begin{picture}(0,0)(0,0)
\put(90,185){$\small\gamma_5$}
\put(145,217){$\small\gamma_4$}
\put(203,180){$\small\gamma_3$}
\put(195,95){$\small\gamma_2$}
\put(133,62){$\small\gamma_1$}
\put(115,155){$\small\beta_5$}
\put(138,183){$\small\beta_4$}
\put(160,158){$\small\beta_3$}
\put(160,117){$\small\beta_2$}
\put(140,93){$\small\beta_1$}
\end{picture}
\end{figure}
Let $C = \{\gamma_{1}, \ldots, \gamma_{g}\}$ be a fixed cut system of $S$, and let $B = \{\beta_{1}, \ldots, \beta_{g}\}$ be a multicurve such that $C \cup B$ is a multicurve, and for all $i= 1, \ldots, g$, $\beta_{i}$ bounds a one-holed torus which contains $\gamma_{i}$. Note again that up to homeomorphism, all possible choices of $B$ appear as on the left in Figure~\ref{F:cut system}.
We then extend $C \cup B$ to the pants decomposition $P = C \cup B \cup A$.

Let $\Sigma_0 = S - C \cong S_{0,2g+n}$, and observe that $A \cup B$ is a pants decomposition of $\Sigma_0$.  We have an isomorphism $h_0^C \colon \MP(S_{0,2g+n}) \to \MP_C(S_{g,n})$ induced by a homeomorphism $h_0 \colon S_{0,2g+n} \to \Sigma_0$.  This isomorphism sends the vertex defined by $h_0^{-1}(A \cup B)$ to the vertex defined by $P$, and without loss of generality we may assume that $h_0^{-1}(A \cup B)$ defines a vertex of $Z_{0,2g+n}$, the subgraph used in the construction of $X_{0,2g+n}$ (see Section~\ref{sec:Background}).   Now define
\[ Z_0 = h_0^C(Z_{0,2g+n}) \quad \mbox{ and } \quad X_0  = h_0^C(X_{0,2g+n})\]
and note that $Z_0 \subset X_0 \subset \MP_C(S_{g,n})$.


Next, for all $i= 1, \ldots, g$, set $M_{i} = P \backslash \{\gamma_{i},\beta_{i}\}$ and write $\Sigma_i \cong S_{1,2}$ to denote the nontrivial component of $S - M_{i}$.  A homeomorphism $h_i \colon S_{1,2} \cong \Sigma_i$ induces an isomorphism $h_i^{M_i} \colon \MP(S_{1,2}) \to \MP_{M_i}(S_{g,n})$.  Without any further loss in generality, we may assume that $h_i^{M_i}(v_2) = P$ and $h_i^{M_i}([v_2,v_3]) \subset h_0^C(Z_{0,2g+n})$, where $v_2,v_3 \in Z_{1,2}$ are as in Figure~\ref{F:S12}, and $[v_2,v_3]$ denotes the edge spanned by these vertices.  We then let
\[ Z_i = h_i^{M_i}(Z_{1,2}) \quad \mbox{ and } \quad X_i = h_i^{M_i}(X_{1,2}),\]
and note that $Z_i \subset X_i \subset \MP_{M_i}(S_{g,n})$.  See Figures~\ref{F:X13}, \ref{F:X21}, and  \ref{F:X22} for examples.

By construction, there is an edge $e_i \subset Z_0 \cap Z_i = h_0^C(Z_{0,2g+n}) \cap h_i^{M_i}(Z_{1,2})$.  On the other hand, in each of $Z_{0,2g+n}$ and $Z_{1,2}$ the respective edges sent to $e_i$ are contained in a pair of triangles in the respective enlargements $X_{0,2g+n}$ and $X_{1,2}$.  Therefore, for each $i = 1,\ldots, g$, setting $Y_i = X_0 \cap X_i$, we see that $Y_i$ contains the pair of triangles containing $e_i$, contained in the Farey graph $F_i \subset \MP(S_{g,n})$ defined by $e_i$.  In fact, because $X_0 \subset \MP_{\gamma_i}(S_{g,n})$, inspection of Figure~\ref{F:hexagon1} shows that $Y_i$ is exactly the union of the two triangles containing $e_i$.  On the other hand, since $\Sigma_i \cap \Sigma_j$ overlap in at most a pair of pants in the complement of $P$ for any $i,j \geq 1$ with $i \neq j$, it follows that for such $i,j$, $X_i \cap X_j  = \{P\}$.

After these observations, we now define $X_{g,n}$ to be the subgraph
\[ X_{g,n} =  \bigcup_{i=0}^{g} X_{i} \subset \MP(S_{g,n}).\]
Note that $X_{g,n}$ depends on the choice of $P$ (as well as the choices of the various homeomorphisms involved), and Figures~\ref{F:X13},~\ref{F:X21}, and~\ref{F:X22} in the appendix show possible examples of $X_{1,3}, X_{2,1}$ and $X_{2,2}$, respectively.  Now we are ready to prove the following theorem using the same general idea as in the proof of Theorem \ref{T:Main for S20}.

\begin{theorem} [Main Theorem for $S_{g,n}$] 
 Let $S = S_{g,n}$ $g\geq 1$, $\kappa(S) \geq 3$ and $(g,n) \neq (2,0)$. For any surface $S' = S_{g',n'}$ and any injective simplicial map
\[
\phi:X_{g,n}\to\MP(S'),
\]
there exists a deficiency-$\kappa(S)$ multicurve $Q$ and a homeomorphism $f:S \to (S'-Q)_0$ so that $f^Q|_{X_{g,n}} = \phi$. Moreover, $f$ is unique up to isotopy.
\end{theorem}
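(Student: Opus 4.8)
The plan is to imitate the proof of Theorem~\ref{T:Main for S20}, organized around the (asymmetric) decomposition $X_{g,n}=\bigcup_{i=0}^g X_i$, where $X_0\cong X_{0,2g+n}$ is the genus-zero base piece coming from $\Sigma_0=S-C$ and each $X_i\cong X_{1,2}$ ($i=1,\dots,g$) comes from the handle subsurface $\Sigma_i\cong S_{1,2}$. First I would apply Theorem~\ref{T:Main for S0n} to $\phi|_{X_0}$ — legitimate since the hypotheses $\kappa(S)\ge 3$ and $(g,n)\ne(2,0)$ force $2g+n\ge 5$ — obtaining a multicurve $Q_0\subset S'$ of deficiency $2g+n-3$ and a homeomorphism $g_0\colon\Sigma_0\to (S'-Q_0)_0$, unique up to isotopy, with $g_0^{Q_0}|_{X_0}=\phi|_{X_0}$. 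Then I would apply Theorem~\ref{T:Main for S12} to each $\phi|_{X_i}$ to obtain a deficiency-$2$ multicurve $Q_i\subset S'$ and a homeomorphism $g_i\colon\Sigma_i\to (S'-Q_i)_0$ with $g_i^{Q_i}|_{X_i}=\phi|_{X_i}$, unique up to isotopy and precomposition with the hyperelliptic involution of $\Sigma_i$.

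Next I would isolate a common core multicurve. For each $i\ge 1$, the overlap $Y_i=X_0\cap X_i$ is a pair of triangles spanned inside a single Farey graph $F_i\subset\MP(S)$ (whose nontrivial complementary component is an $S_{0,4}$), and $\phi|_{Y_i}$ equals both $g_0^{Q_0}$ and $g_i^{Q_i}$ there. Since a Farey graph in $\MP(S')$ is pinned down by its deficiency-$1$ multicurve, equating the Farey graph of $\phi(Y_i)$ obtained from the two descriptions forces tight compatibility between $(Q_0,g_0)$ and $(Q_i,g_i)$; following the bookkeeping in the proof of Theorem~\ref{T:Main for S20} (cf.\ Aramayona's argument in~\cite{Aramayona}) I would extract a single multicurve $Q$ of deficiency $\kappa(S)=3g+n-3$ with $Q\subset Q_i$ for every $i=0,\dots,g$, and with $Q_0=Q\cup\{\delta_1,\dots,\delta_g\}$ for distinct curves $\delta_i$, where $\delta_i$ plays in $(S'-Q_0)_0$ the role that $\gamma_i$ plays in $\Sigma_0$. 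In particular $(S'-Q)_0$ is $(S'-Q_0)_0\cong S_{0,2g+n}$ with $g$ one-handles attached, hence homeomorphic to $S_{g,n}$, and $g_0$ extends across the handles.

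I would then assemble $f$. As in the $S_{2,0}$ argument, agreement of $g_i$ with $g_0$ on a triangle of $F_i$ forces them to agree on every essential simple closed curve of $\Sigma_0\cap\Sigma_i\cong S_{0,4}$; combining this with the fact that the boundary curves of $\Sigma_0\cap\Sigma_i$ lie in $Q$ or in vertices of $X_{g,n}$, hence map canonically, one can resolve the hyperelliptic ambiguity in each $g_i$ so that $g_i$ and $g_0$ agree up to isotopy on $\Sigma_0\cap\Sigma_i$. Since $\Sigma_j\cap\Sigma_k$ is a pair of pants for distinct $j,k\ge 1$ (so $X_j\cap X_k=\{P\}$) and $\bigcup_{i=0}^g\Sigma_i=S$, the $g_i$ are pairwise compatible on overlaps and glue to a homeomorphism $f\colon S\to (S'-Q)_0$. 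A direct computation as in the last paragraph of the proof of Theorem~\ref{T:Main for S20} — writing $v\in X_i$ as $v=\gamma_i\cup\alpha\cup\beta$ and using $\delta_i=g_j(\gamma_i)$ for $j\ne i$ — yields $f^Q(v)=\phi(v)$, and since $X_{g,n}=\bigcup_i X_i$ this gives $f^Q|_{X_{g,n}}=\phi$.

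For uniqueness: $Q$ is the maximal multicurve common to all vertices of $\phi(X_{g,n})$, so it is canonical; $f|_{\Sigma_0}=g_0$ is unique up to isotopy by Theorem~\ref{T:Main for S0n}; and any self-homeomorphism of $S$ isotopic to the identity on $\Sigma_0=S-C$ is a product of Dehn twists about the $\gamma_i$, none of which (if nontrivial) can fix every curve appearing in a vertex of the corresponding $X_i$, since in $\Sigma_i\cong S_{1,2}$ some curve generating $X_i$ meets $\gamma_i$. Hence $f$ is unique up to isotopy — and this is exactly where $(g,n)\ne(2,0)$ matters: for $S_{2,0}$ the base piece would be $S_{0,4}$, too small for Theorem~\ref{T:Main for S0n}, which is why Section~\ref{sec:S20} treats it separately and the hyperelliptic ambiguity survives. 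The step I expect to be the main obstacle is the assembly: each $g_i$ is only well defined up to the hyperelliptic involution of $\Sigma_i$, and one must verify — handling the several overlap regions and the $S_{0,4}$-exceptional behaviour simultaneously — that these local choices can be made mutually compatible so as to glue to a genuine homeomorphism, a heavier version of the corresponding passage for $S_{2,0}$.
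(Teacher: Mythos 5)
Your proposal follows the paper's proof essentially step for step: apply Theorems~\ref{T:Main for S0n} and~\ref{T:Main for S12} to the pieces $X_0,\dots,X_g$, use the overlaps $Y_i$ (pairs of triangles in the Farey graphs $F_i$ of the four-holed spheres $\Sigma_0\cap\Sigma_i$) to align the local homeomorphisms, glue, and rule out Dehn twisting about the $\gamma_i$ for uniqueness. The one delicate point you flag -- resolving the hyperelliptic ambiguity so that $g_i$ and $g_0$ agree on $\Sigma_0\cap\Sigma_i$, in particular send the pair of pants bounded by $\gamma_i^{\pm}$ and $\beta_i$ to the same pair of pants -- is exactly the step the paper settles with a short argument exploiting $2g+n>4$, so your plan matches the paper's.
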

\begin{proof}
 Let $S^{\prime} = S_{g^{\prime},n^{\prime}}$, and $\phi: X_{g,n} \to \MP(S^{\prime})$ be an injective simplicial map. For each $i = 0, \ldots, g$, we denote by $\phi_{i}$ the restriction of $\phi$ to $X_{i}$. Since each $\phi_{i}$ is injective and $X_{i}$ is a copy of a rigid graph, there exist multicurve $Q_0$ of deficiency $2g+n-3$, and $Q_i$ of deficiency $2$ for $i \geq 1$, together with homeomorphisms
\[ f_i \colon \Sigma_i \to (S'- Q_i)_0, \]
such that $f_i^{Q_i}|_{X_i} = \phi_i$.
The homeomorphism $f_0$ is uniquely determined by $\phi_0$, while for $i \geq 1$, $f_i$ is determined up to pre-composing with the hyperelliptic involution.

For $i = 1,\ldots,g$, bserve that $Y_i= X_0 \cap X_i$ is a union of two triangles containing $e_i$ in the Farey graph $F_i$ of a four-holed sphere $\Sigma^{0,i} \subset S_{g,n}$.  Since
\[ \phi_0|_{Y_i} = \phi|_{Y_i} = \phi_i|_{Y_i},\]
it follows that $f_0|_{\Sigma^{0,i}}$ and $f_i|_{\Sigma^{0,i}}$ agree up to one of the hyperelliptic involutions of $\Sigma^{0,i}$, for each $i = 1,\ldots,g$.  For each $i$, there are two boundary components $\gamma_i^\pm$ of $\Sigma^{0,i}$ that are identified in $S_{g,n}$ to the curve $\gamma_i$.  

We claim that for each $i$, $f_0|_{\Sigma^{0,i}}$ and $f_i|_{\Sigma^{0,i}}$ agree up to the hyperelliptic involution that set-wise preserves the pair of boundary curves $\{\gamma_i^+,\gamma_i^-\}$ (but interchanges them).  
If we let $\Omega \subset \Sigma^{0,i}$ be the pair of pants bounded by $\gamma_i^+$, $\gamma_i^-$, and $\beta_i$, then since $f_0(\beta_i) = f_i(\beta_i)$, what we need to show is that $f_0(\Omega) = f_i(\Omega)$.  Next, note that $\gamma_i^\pm$ are boundary components of $\Sigma_0$, and so $f_0(\gamma_i^\pm)$ are boundary components of $f_0(\Sigma_0)$.  Since $2g+n > 4$ by hypothesis, at least one of the other two boundary components of $f_0(\Omega)$ is an essential curve $\delta_i$ in $f_0(\Sigma_0)$.  Since $f_0(\Sigma_0)$ is a sphere with holes, the pants in $S' - \phi(P)$ adjacent to $\delta_i$ are distinct pairs of pants.  Now if $f_i(\Omega) \neq f_0(\Omega)$, then one of $f_i(\gamma_i^+)$ or $f_i(\gamma_i^-)$ would have to be the essential curve $\delta_i$ in $f_0(\Sigma_0)$.   Since $\gamma_i^\pm$ are identified to $\gamma_i$ in $\Sigma_i$, it follows that $f_i(\gamma_i^+) = \delta_i = f_i(\gamma_i^-)$ in $S'$, and hence $f_i(\Omega)$ is the pair of pants on both sides of $\delta_i$, a contradiction.  This proves the claim.

For each $i \geq 1$, we may precompose $f_i$ with the hyperelliptic involution of $\Sigma_i$ from the previous paragraph, so that $f_0|_{\Sigma^{0,i}} = f_i|_{\Sigma^{0,i}}$.  We can therefore glue together the maps $f_0,f_1,\ldots,f_g$ to a single map $f \colon S \to S'$.  For each of the boundary components $\gamma_i^\pm$, we have $f_0(\gamma_i^\pm) = f_i(\gamma_i)$, and thus $f(\gamma_i)$ is a component of $Q_0$.  Since at most two boundary components of $\Sigma_0$ can map to a single curve in $S'$, it follows that $f(\gamma_1),\ldots,f(\gamma_g)$ are $g$ distinct components of $Q_0$, and hence
\[ Q = Q_0 - f(\gamma_1) \cup \ldots \cup f(\gamma_g) \]
is a multicurve with deficiency $g+ 2g+n-3 = 3g+n-3 = \kappa(S)$ and $f(S) = (S' - Q)_0$.  Since $f_i$ restricts to $f$ on $\Sigma_i$ where $f_i$ induces $\phi_i$ for each $i$, and since $X_{g,n}$ is the union of the subgraphs $X_i$, it follows that $f$ induces $\phi$; that is, $\phi = f^Q|_{X_{g,n}}$, as required.

Uniqueness follows from the uniqueness of the defining maps $f_0,\ldots,f_g$.  Specifically, uniqueness of $f_0$ implies $f$ is unique up to isotopy and possibly Dehn twisting in one or more of $\gamma_1,\ldots,\gamma_g$.  However, uniqueness of $f_1,\ldots,f_g$ (up to the hyperelliptic involutions) implies that there is no Dehn twisting, and thus $f$ is  unique up to isotopy.

\end{proof}


\section{Appendix: Additional examples of finite rigid sets.}
Here we provide three additional examples of the finite rigid sets from Section~\ref{sec:Sgn} to better illustrate the construction.
\begin{figure}[ht]
\begin{center}
\includegraphics[height=9cm]{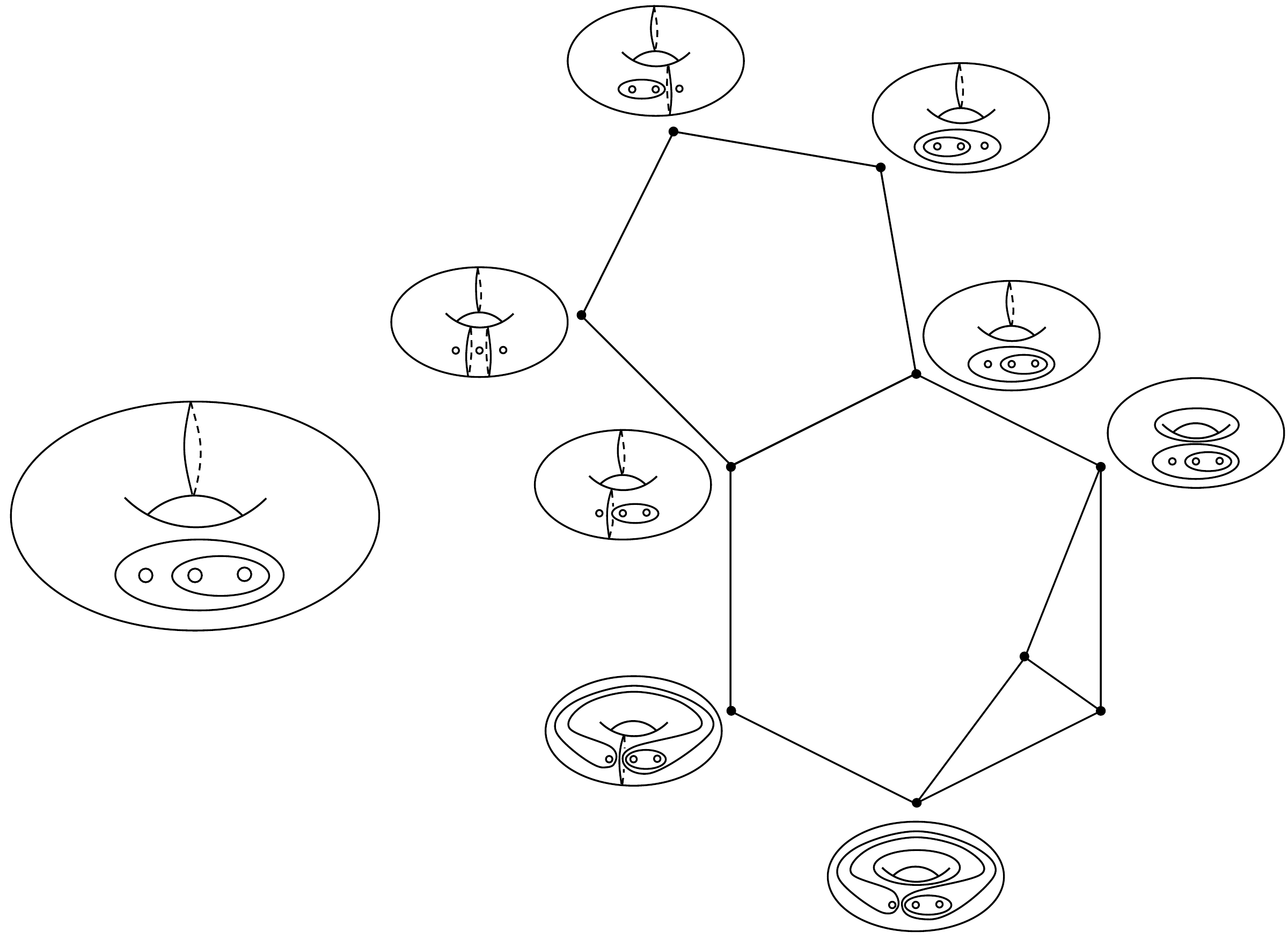} 
\caption{{\bf Left:} $S_{1,3}$ and a pants decomposition $P=C\cup B\cup A$.  {\bf Right:}  Subgraphs $Z_0 \subset X_0 \cong X_{0,5}$ and $Z_1 \subset X_1 \cong X_{1,2}$ glued along $e_1$ used to build $X_{1,3} = X_0 \cup X_1$. Note the vertex $P \in X_0 \cap X_1$.}
\label{F:X13}
\end{center}
\begin{picture}(0,0)(0,0)
\put(81,155){$\small\beta_1$}
\put(100,185){$\small\gamma_1$}
\put(306,195){$\small P$}
\put(260,230){$\small Z_0$}
\put(300,150){$\small Z_1$}
\put(283,188){$\small e_1$}
\end{picture}
\begin{center}
\includegraphics[height=8.5cm]{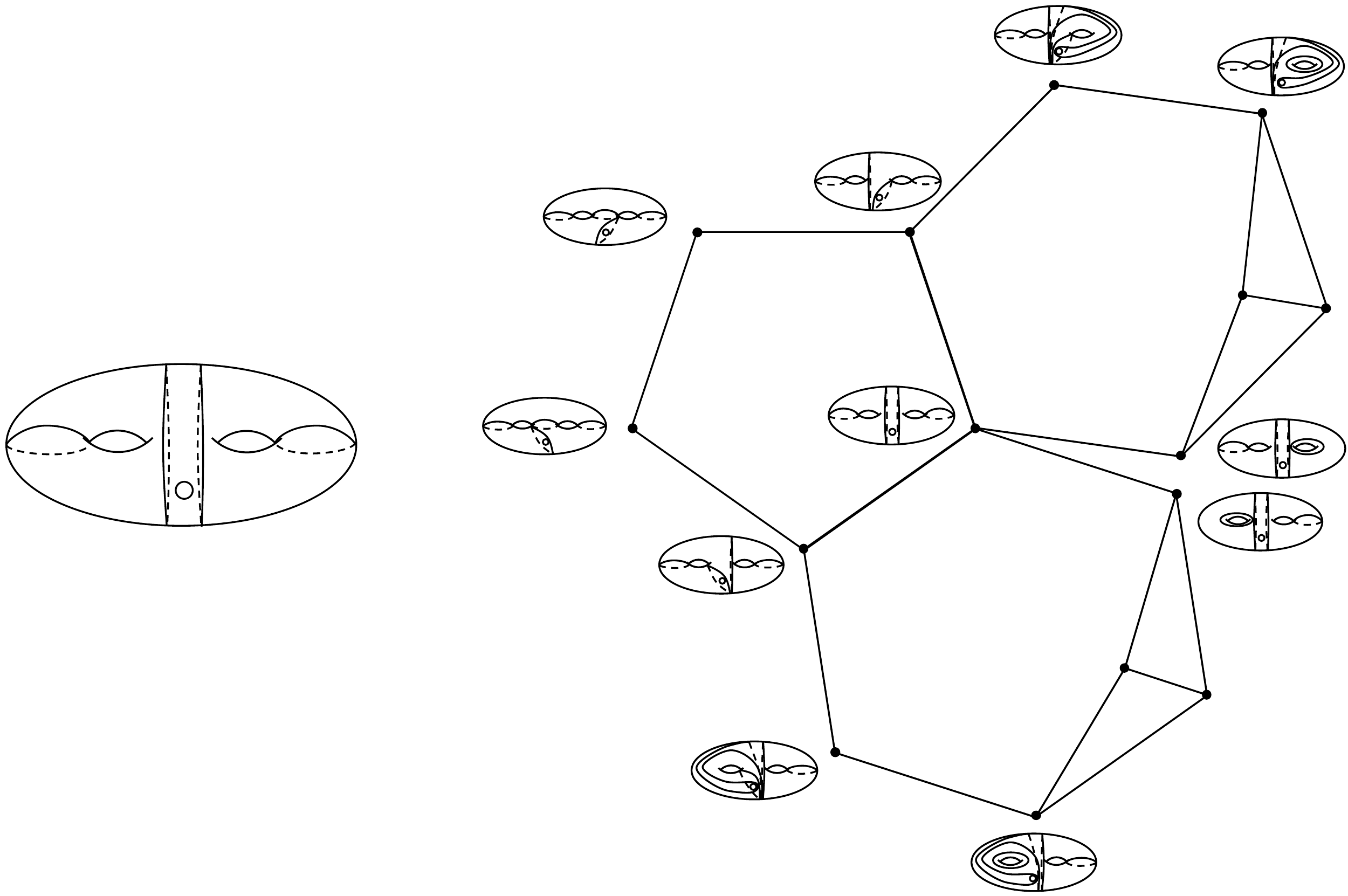} \caption{{\bf Left:} $S_{2,1}$ and a pants decomposition $P=C\cup B$. {\bf Right:} The subgraphs $Z_0 \subset X_0 \cong X_5$ and $Z_i \subset X_i \cong X_{2,1}$, for $i = 1,2$,  glued to $X_0$ along edges $e_1$ and $e_2$ used to construct $X_{2,1}=X_0\cup X_1\cup X_2$.  Note the vertex $P$ in $X_0 \cap X_1 \cap X_2$.}
\label{F:X21}
\end{center}
\begin{picture}(0,0)(0,0)
\put(85,175){$\small\beta_1$}
\put(110,175){$\small\beta_2$}
\put(42,185){$\small\gamma_1$}
\put(152,185){$\small\gamma_2$}
\put(313,180){$\small P$}
\put(263,210){$\small Z_0$}
\put(315,140){$\small Z_1$}
\put(345,235){$\small Z_2$}
\put(290,167){$\small e_1$}
\put(310,225){$\small e_2$}
\end{picture}
\end{figure}
\begin{figure}[ht]
\begin{center}
\includegraphics[height=10cm]{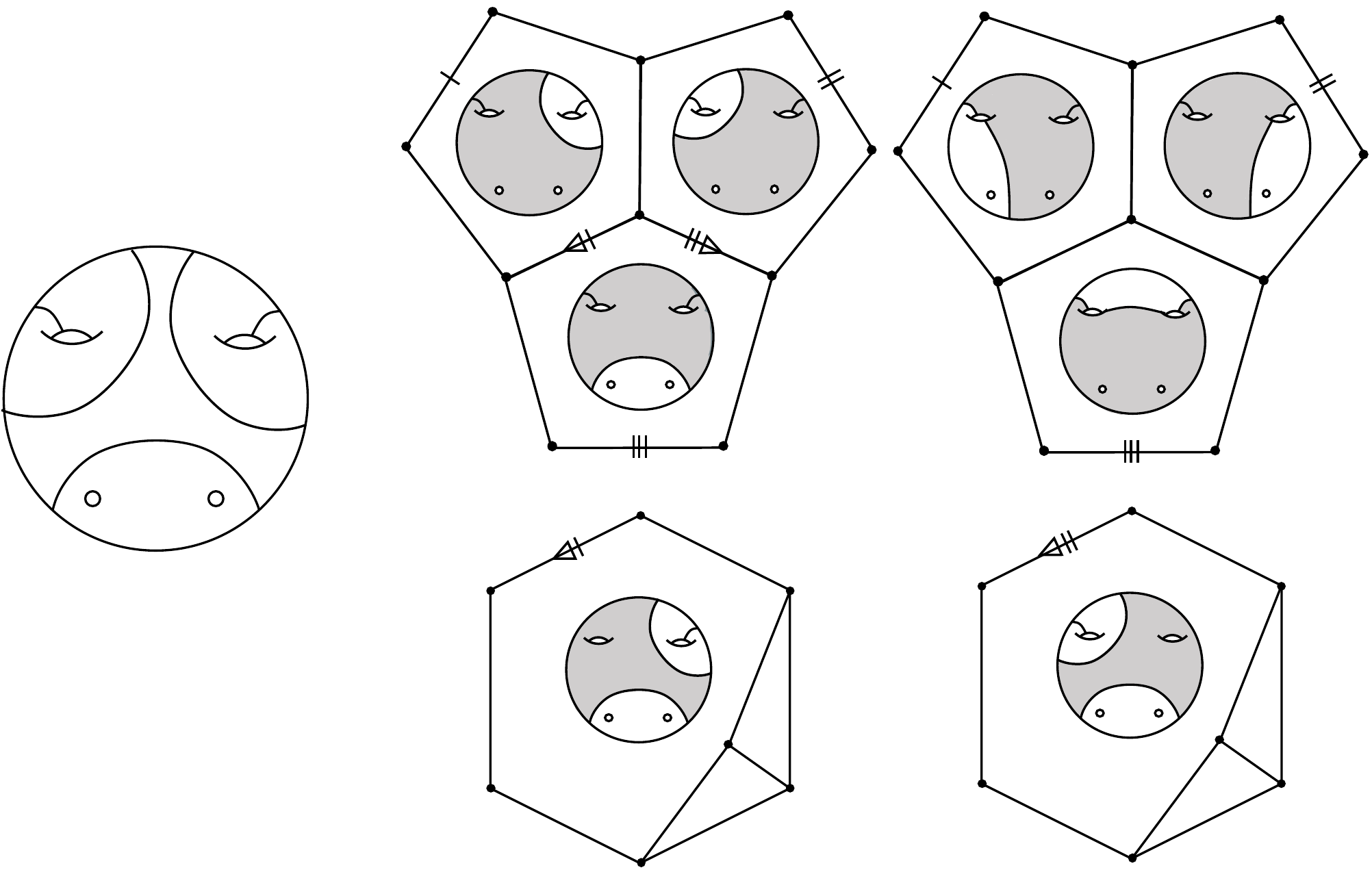} 
\caption{{\bf Left:} $S_{2,2}$ and a pants decomposition $P=C\cup B\cup A$. {\bf Right:} The subgraphs $Z_i \subset X_i \cong X_{1,2}$, for $i =1,2$ and $Z_0 \subset X_0 \cong X_{0,6}$ which is a union of six copies of $Z_{0,5} \subset X_{0,5}$.  These subgraphs are glued to construct $X_{2,2}=X_0\cup X_1\cup X_2$.  Edges are identified as indicated in the figure; note the edges $e_1$ and $e_2$ and the vertex $P$.}
\label{F:X22}
\end{center}
\vspace{.4cm}
\begin{picture}(0,0)(0,0)
\put(10,280){$\small \gamma_1$}
\put(105,280){$\small \gamma_2$}
\put(50,300){$\small \beta_1$}
\put(73,300){$\small \beta_2$}
\put(60,225){$\small \alpha$}
\put(215,295){$\small P$}
\put(290,270){$\small Z_0$}
\put(275,150){$\small Z_1$}
\put(310,150){$\small Z_2$}
\put(190,205){$\small e_1$}
\put(350,205){$\small e_2$}
\put(180,295){$\small e_1$}
\put(250,295){$\small e_2$}
\end{picture}
\end{figure}

\end{document}